\theoremstyle{plain}
\newtheorem{theorem}{Theorem}[section]
\newtheorem{proposition}[theorem]{Proposition}
\newtheorem*{theorem*}{Theorem}
\newtheorem{corollary}[theorem]{Corollary}
\newtheorem*{corollary*}{Corollary}
\newtheorem{lemma}[theorem]{Lemma}
\theoremstyle{example}
\newtheorem{remark}[theorem]{Remark}
\begin{document}

\vspace{10mm}
\begin{flushright}
{\em Dedicated to Yury Evgen'evich Shishmaryov}
\end{flushright}

\vspace{5mm}

\title[Abelian and Hamiltonian groupoids] {Abelian and Hamiltonian groupoids} \subjclass{20 M 10}

\noindent УДК~510.8:512.57 \keywords{\em Abelian algebra,
Hamiltonian algebra, groupoid, quasigroup, semigroup}
\date{\today}
\author{Stepanova A.A.${}^1$, Trikashnaya N.V.} \footnotetext[1]
{This research was partially supported by the grant of the leading
science schools of Russia (grant SS-2810.2008.1) and by RFBR
(grant 09-01-00336-a)}
\address{Institute of Mathematics and Computer Science\\Far East State
University\\Vladivostok\\Russia} \email{stepltd@mail.ru}
\address{Institute of Mathematics and Computer Science\\Far East State
University\\Vladivostok\\Russia} \email{trik74@mail.ru}

\begin {abstract} In the work we investigate some
groupoids which are the Abelian algebras and the Hamiltonian
algebras. An algebra is Abelian if for every polynomial operation
and for all elements $a,b,\bar c,\bar d$ the implication $t(a,\bar
c)=t(a,\bar d)\rightarrow t(b,\bar c)=t(b,\bar d)$ holds; an
algebra is Hamiltonian if every subalgebra is a block of some
congruence on the algebra. R.V. Warne in 1994 described the
structure of the Abelian semigroups. In this work we describe the
Abelian groupoids with identity, the Abelian finite quasigroups
and the Abelian semigroups $S$ such that $abS=aS$ and $Sba=Sa$ for
all $a,b\in S$. We prove that a finite Abelian quasigroup is a
Hamiltonian algebra. We characterize the Hamiltonian groupoids
with identity and semigroups under the condition of Abelian of
this algebras.
\end {abstract} \maketitle

\sloppy \vskip 1cm

\section {Introduction}

The Abelian and Hamiltonian properties for algebras were
investigated in \cite{hm,kv,kval}. In this work we study the
groupoids, which are Abelian algebras and Hamiltonian algebras.

Let us remind some definitions. An algebra $\langle
A;\cdot\rangle$ with binary operation $ \cdot $ is called a
groupoid. A groupoid $\langle A;\cdot\rangle$ is called a
quasigroup if for any $a, b\in A $ there exist uniquely determined
elements $x,y\in A$ satisfying $x\cdot a=b $, $a\cdot x=b $. A
quasigroup $\langle A;\cdot\rangle$ with identity element $1$ such
that $1\cdot a=a\cdot 1=a$ for every element $a\in A$ is called a
loop. A groupoid with an operation satisfying the associative law
is called a semigroup.

An algebra is Abelian if for every polynimial operation $t (x,
y_1, \ldots, y_n) $ and for all elements $u, v, c_1, \ldots, c_n,
d_1, \ldots, d_n $ in algebra the equality $t(u, c_1, \ldots, c_n)
=t (u, d_1, \ldots, d_n)$ implies $t (v, c_1, \ldots, c_n) =t (v,
d_1, \ldots, d_n).$ An algebra is Hamiltonian if every subalgebra
is a block of some congruence on the algebra. It is not hard to
show that any group $\langle A;\cdot\rangle$ is Abelian iff it is
commutative; it is Hamiltonian iff every subgroup is a normal
subgroup; as easy to prove is that any module or essentially unary
algebra is Abelian and Hamiltonian. In \cite {ovch} there is the
description of Abelian groupoid $\langle A;\cdot\rangle$ with
$|A\cdot A|\leq 3$. In \cite{war1,war2} it is characterized the
Abelian semigroups, the periodic Abelian semigroups, the
semisimple Abelian semigroups; it is considered the questions,
which connect with the Hamiltonian semigroups.

\section {Groupoids with identity}

In this section we consider the groupoids with the identity
satisfying the Abelian and Hamiltonian properties. The identity of
groupoid we  denote by 1.

\begin {theorem} \label {ab gr s 1}
Let $\langle A; \cdot\rangle$ be a groupoid with the identity. A
groupoid $\langle A;\cdot\rangle$ is an Abelian algebra iff
$\langle A;\cdot\rangle$ is a commutative semigroup such that for
all $a, b\in A$ the equation $a\cdot x=b$ has not more than one
solution in $\langle A;\cdot\rangle$.
\end {theorem}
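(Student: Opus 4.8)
The plan is to prove both implications by exploiting the term condition (the defining property of an Abelian algebra) in the forward direction and a normal form for polynomials in the backward direction.

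First I would treat the direction ``Abelian $\Rightarrow$ commutative cancellative semigroup,'' extracting each required property from a single well-chosen instance of the term condition, always substituting the identity $1$ where convenient. For the statement that $a\cdot x=b$ has at most one solution (i.e. left cancellation) take the polynomial $t(x,y)=x\cdot y$: from $t(a,c)=t(a,d)$, that is $a\cdot c=a\cdot d$, the term condition yields $t(v,c)=t(v,d)$ for every $v$, and choosing $v=1$ gives $c=d$. For associativity take $t(x,y,z)=(x\cdot y)\cdot z$ and evaluate at $x=1$ with $(c_1,c_2)=(a\cdot b,1)$ and $(d_1,d_2)=(a,b)$; both sides equal $a\cdot b$, so the term condition transports the equality to an arbitrary $v$, giving $(v\cdot(a\cdot b))\cdot 1=(v\cdot a)\cdot b$, that is $v\cdot(a\cdot b)=(v\cdot a)\cdot b$. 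For commutativity take $t(x,y_1,y_2)=(y_1\cdot x)\cdot y_2$ and evaluate at $x=1$ with $(c_1,c_2)=(w,1)$ and $(d_1,d_2)=(1,w)$; both sides equal $w$, so the term condition gives $(w\cdot v)\cdot 1=(1\cdot v)\cdot w$, i.e. $w\cdot v=v\cdot w$, for all $v,w$. Each of these three steps is self-contained and uses only the identity law, so no circular dependence arises.

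For the converse I would first record that an associative, commutative groupoid with identity in which $a\cdot x=b$ has at most one solution is a cancellative commutative monoid. The key structural observation is that in such a monoid every polynomial operation has the normal form $t(x,y_1,\ldots,y_n)=m\cdot x^{k}\cdot y_1^{e_1}\cdots y_n^{e_n}$ for some constant $m\in A$ and exponents $k,e_1,\ldots,e_n\ge 0$; this follows by induction on the construction of $t$, the base cases being variables and constants and the inductive step merging two such products by adding exponents and multiplying the constant parts. Given this, the term condition is immediate: if $t(u,\bar c)=t(u,\bar d)$ then $m\,u^{k}\prod c_i^{e_i}=m\,u^{k}\prod d_i^{e_i}$, and cancelling the element $m\,u^{k}$ leaves $\prod c_i^{e_i}=\prod d_i^{e_i}$, whence $t(v,\bar c)=m\,v^{k}\prod c_i^{e_i}=m\,v^{k}\prod d_i^{e_i}=t(v,\bar d)$ for every $v$.

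I expect the main obstacle to be the forward direction, specifically discovering the correct polynomial witnesses: the equalities to be transported must hold for the substitution $x=1$ yet collapse to the desired law after substituting a general $v$, and the commutativity witness in particular requires placing the special variable $x$ between two parameter slots so that setting one slot to $1$ produces $w$ on both sides. The backward direction is routine once the normal form is established, the only point needing care being the passage from ``at most one solution of $a\cdot x=b$'' to genuine cancellability of arbitrary elements, which holds because commutativity makes left and right cancellation coincide.
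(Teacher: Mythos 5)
Your proof is correct and follows essentially the same route as the paper: each of associativity, commutativity, and cancellation is extracted from a suitable instance of the term condition with $1$ substituted for the distinguished variable, and the converse uses the monomial normal form of polynomials in a commutative cancellative monoid together with the at-most-one-solution hypothesis. Your witnesses differ only cosmetically from the paper's (and your explicit inclusion of the constant $m$ in the normal form, plus the remark that commutativity is derived without presupposing associativity, are minor improvements in care).
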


\begin {proof} Let $\langle A;\cdot\rangle$ be Abelian algebra. We will show that
$\langle A; \cdot\rangle$ is a semigroup. Let $a, b, c\in A$.
Since $\langle A;\cdot\rangle$ is a groupoid with identity then
$(1\cdot b)\cdot(c\cdot 1)=(1\cdot1)\cdot(b\cdot c)$. As $\langle
A;\cdot\rangle$ is an Abelian algebra then $(a\cdot b)
\cdot(c\cdot 1)=(a\cdot 1)\cdot(b\cdot c)$. Hence $(a\cdot b)
\cdot c=a\cdot(b\cdot c)$ and the associative law in a groupoid
$\langle A;\cdot\rangle$ holds.

We will show that $\langle A;\cdot\rangle$ is a commutative
semigroup. Let $a, b\in A$. Since an algebra $\langle
A;\cdot\rangle$ is Abelian then the equality $1\cdot 1\cdot a =
a\cdot 1\cdot 1$ implies $1\cdot b\cdot a = a\cdot b\cdot 1$. Thus
$a\cdot b = b\cdot a $ and the commutative law in a semigroup
$\langle A;\cdot\rangle$ holds.

We will show that for any $a, b\in A$ the equation $a\cdot x = b$
has not more than one solution. Assume that $a\cdot c_1 = a\cdot
c_2$ for some $c_1, c_2\in A$. As $\langle A;\cdot\rangle$ is an
Abelian algebra then $1\cdot c_1 = 1\cdot c_2$. Hence $c_1 = c_2$.

Let us prove sufficiency. Let $t (x, y_1, \ldots, y_n)$ be a
polynomial operation of an algebra $\langle A;\cdot\rangle$. Since
$\langle A;\cdot\rangle$ is a commutative semigroup then $t (x,
y_1, \ldots, y_n) = x^k \cdot y_1 ^ {k_1}\cdot \ldots \cdot y_n ^
{k_n}$ for some $k, k_1, \ldots, k_n\in \omega$. Assume $a_1,
\ldots, a_n, b_1, \ldots, b_n, c, e\in A $ and $c^k \cdot (a_1 ^
{k_1} \cdot \ldots \cdot a_n ^ {k_n}) = c^k \cdot (b_1 ^ {k_1}
\cdot \ldots \cdot b_n ^ {k_n}) =e$. As the equation $c^k\cdot x =
e $ has the unique solution then $a_1 ^ {k_1} \cdot \ldots \cdot
a_n ^ {k_n} =b_1 ^ {k_1} \cdot \ldots \cdot b_n ^ {k_n}$. Hence
$d^k \cdot (a_1 ^ {k_1} \cdot \ldots \cdot a_n ^ {k_n}) = d^k
\cdot (b_1 ^ {k_1} \cdot \ldots \cdot b_n ^ {k_n}) $ for any $d\in
A $. Thus $\langle A;\cdot\rangle$ is an Abelian algebra.
\end {proof}

\begin {corollary}
A finite groupoid with an identity is an Abelian algebra iff it is
an Abelian group.
\end {corollary}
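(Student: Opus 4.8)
The plan is to derive both directions from Theorem~\ref{ab gr s 1}, using finiteness to upgrade its cancellation condition to the genuine existence of inverses. So I would not argue about Abelianness directly at all, but instead reduce everything to the already-established characterization of Abelian groupoids with identity.

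For sufficiency, suppose $\langle A;\cdot\rangle$ is a finite Abelian group. Then it is in particular a commutative semigroup with identity, and for all $a,b\in A$ the equation $a\cdot x=b$ has the unique solution $x=a^{-1}\cdot b$; in particular it has at most one solution. Hence Theorem~\ref{ab gr s 1} applies immediately and $\langle A;\cdot\rangle$ is an Abelian algebra.

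For necessity, suppose the finite groupoid with identity $\langle A;\cdot\rangle$ is an Abelian algebra. By Theorem~\ref{ab gr s 1} it is a commutative semigroup in which $a\cdot x=b$ has at most one solution for all $a,b$. I would first observe that this makes each left translation $\lambda_a\colon x\mapsto a\cdot x$ injective: if $a\cdot x_1=a\cdot x_2=b$, then $x_1$ and $x_2$ are both solutions of $a\cdot x=b$, so $x_1=x_2$. Since $A$ is finite, an injective map $A\to A$ is automatically surjective, so $\lambda_a$ is a bijection for every $a\in A$. From surjectivity of $\lambda_a$ the equation $a\cdot x=1$ has a solution, giving a right inverse of $a$; by commutativity this is a two-sided inverse, so every element of $A$ is invertible. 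Therefore $\langle A;\cdot\rangle$ is a commutative monoid in which every element has an inverse, i.e.\ an Abelian group.

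The only non-formal step is the passage from injectivity to surjectivity of the left translations, which I expect to be the main (though minor) point: it is exactly here that finiteness is essential, since an infinite cancellative commutative monoid with identity need not be a group. Everything else is a routine unwinding of Theorem~\ref{ab gr s 1} together with the definition of an Abelian group.
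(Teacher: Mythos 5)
Your proof is correct and follows the route the paper intends: the corollary is stated without proof precisely because it is this routine consequence of Theorem~\ref{ab gr s 1}, with finiteness used exactly where you use it, to turn the cancellation property (injectivity of left translations) into surjectivity and hence invertibility. Nothing is missing.
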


\begin {corollary}
A finite Abelian groupoid with an identity is a Hamiltonian
algebra.
\end {corollary}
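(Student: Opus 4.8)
The plan is to reduce the statement to the classical fact that finite Abelian groups are Hamiltonian. By the preceding corollary, a finite groupoid $\langle A;\cdot\rangle$ with identity that is an Abelian algebra is in fact an Abelian group, so it suffices to show that every finite Abelian group, regarded as a groupoid $\langle A;\cdot\rangle$ with the single operation $\cdot$, is a Hamiltonian algebra.

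First I would identify the subalgebras. A (nonempty) subalgebra of $\langle A;\cdot\rangle$ is a subset $B\subseteq A$ closed under $\cdot$. Since $A$ is finite, for any $b\in B$ the powers $b,b^2,b^3,\ldots$ must eventually repeat, so some positive power of $b$ equals the identity $1$ and some positive power equals $b^{-1}$; hence $1\in B$ and $B$ is closed under inverses. Thus every subalgebra of $\langle A;\cdot\rangle$ is a subgroup of the group $A$.

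Next, given a subgroup $H\leq A$, I would exhibit a congruence having $H$ as a block. Define $\theta_H$ by $a\,\theta_H\,b\iff a^{-1}b\in H$; its classes are precisely the cosets of $H$. Compatibility with $\cdot$ follows from commutativity: if $a^{-1}a'\in H$ and $b^{-1}b'\in H$, then $(ab)^{-1}(a'b')=a^{-1}b^{-1}a'b'=(a^{-1}a')(b^{-1}b')\in H$, so $\theta_H$ is a congruence of $\langle A;\cdot\rangle$. The class of the identity is $[1]_{\theta_H}=H$, so $H$ is a block of $\theta_H$. Combining the two steps, every subalgebra $H$ of $\langle A;\cdot\rangle$ is a block of the congruence $\theta_H$, which is exactly the assertion that $\langle A;\cdot\rangle$ is Hamiltonian.

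The only genuinely delicate point is the first step, namely that in the bare groupoid signature a subalgebra is automatically a subgroup; here finiteness is essential, since it is what forces a multiplicatively closed subset to contain the identity and the inverses. The congruence construction is then routine, because in an Abelian group every subgroup is normal and the coset partition is automatically compatible with multiplication.
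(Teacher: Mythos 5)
Your proof is correct and follows the same route the paper intends: the preceding corollary reduces the claim to finite Abelian groups, finiteness forces every multiplicatively closed subset to contain the identity and inverses (hence to be a subgroup), and the coset partition of any subgroup gives the required congruence — this is exactly the argument the authors spell out in the sufficiency direction of their subsequent theorem on periodic Abelian groups. No gaps; you simply make explicit the details the paper leaves implicit in stating the corollary without proof.
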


\begin {lemma} \label {gam polugr period}
If a semigroup $\langle A;\cdot\rangle$ is a Hamiltonian algebra
then for any $a\in A $ there exist $i, j $, $1\leq i <j $, such
that $a^i=a^j$.
\end {lemma}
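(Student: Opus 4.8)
The plan is to argue by contraposition. Suppose some element $a\in A$ is not periodic, i.e. the powers $a,a^2,a^3,\dots$ are pairwise distinct. I would then exhibit a subsemigroup of $\langle A;\cdot\rangle$ that cannot be a block of any congruence, contradicting the Hamiltonian hypothesis. The only structural fact I need is the elementary one recorded in the definition: if $T$ is a block of a congruence $\theta$ (i.e. $T$ is exactly one $\theta$-class) and $x\,\theta\,y$ with $y\in T$, then $x\in T$. Under the distinctness assumption, $\langle a\rangle=\{a^n:n\ge 1\}$ is the infinite monogenic semigroup, so for any subsemigroup $T\subseteq\langle a\rangle$ the membership $a^n\in T$ is governed by membership of $n$ in the corresponding additively-closed set of exponents.

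The key step is to choose the right subsemigroup, namely $T=\langle a^2,a^5\rangle$. Since the set of exponents generated additively by $2$ and $5$ is $\{2,4,5,6,7,\dots\}$, it contains $2,4,5$ but has a gap at $3$; hence $a^2,a^4,a^5\in T$ while $a^3\notin T$, the latter using that the powers of $a$ are distinct. Because $\langle A;\cdot\rangle$ is Hamiltonian, there is a congruence $\theta$ having $T$ as a block. From $a^2,a^4\in T$ we get $a^2\,\theta\,a^4$, and multiplying on the right by $a$ and using compatibility of $\theta$ yields $a^3\,\theta\,a^5$. As $a^5\in T$ and $T$ is a full $\theta$-class, this forces $a^3\in T$, contradicting $a^3\notin T$. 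Therefore no element can have all powers distinct, so for every $a$ there exist $1\le i<j$ with $a^i=a^j$.

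The hard part is recognizing why the naive choices fail, which is really the whole content of the lemma. The subsemigroup $\langle a^2\rangle$ of even powers, and likewise $\langle a^2,a^3\rangle=\{a^n:n\ge 2\}$, can genuinely arise as congruence classes (multiplication by $a$ simply carries such a class into a single other class), so they produce no contradiction. A contradiction appears only for a subsemigroup whose exponent set contains two elements $i<j$ such that, after shifting both by $1$, exactly one of $i+1,\,j+1$ remains in the set; the gap of $\langle 2,5\rangle$ at $3$ supplies precisely this, with $i=2,\,j=4$. The remaining points are routine: checking $3\notin\langle 2,5\rangle$ while $5\in\langle 2,5\rangle$, and observing that the argument uses only the congruence property and never the identity of the groupoid, so it applies to arbitrary Hamiltonian semigroups.
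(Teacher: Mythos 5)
Your proof is correct and is essentially the paper's own argument: your subsemigroup $\langle a^2,a^5\rangle$, with exponent set $\{2\}\cup\{k\mid k\ge 4\}$, is exactly the set $B=\{a^2\}\cup\{a^k\mid k\ge 4\}$ used in the paper, and the contradiction is derived the same way, from $a^2\,\theta\,a^4$ forcing $a^3\,\theta\,a^5$ and hence $a^3\in B$. No differences worth noting.
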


\begin {proof}
Let $\langle A;\cdot\rangle$ be Hamiltonian semigroup and $a\in A
$. Assume that $a ^ {i} \neq a ^ {j}$ for any $i, j $, $i\neq j $.
Let $B = \{a ^ {2} \} \cup \{a ^ {k} \mid k\geq 4 \}$. Then
$\langle B,\cdot\rangle$ be a subsemigroup of a semigroup $\langle
A,\cdot\rangle$. As $\langle A,\cdot\rangle$ is Hamiltonian then
there exists a congruence $\Theta$ such that $B$ is its class.
From $a\Theta a $ and $a ^ {2} \Theta a ^ {4}$ follows $a ^ {3}
\Theta a ^ {5}$, i.e. $a ^ {3} \in B$. Contradiction.
\end {proof}

\begin {theorem}
Let $\langle A;\cdot\rangle$ be Abelian groupoid with identity.
The groupoid $\langle A;\cdot\rangle$ is a Hamiltonian algebra iff
$\langle A;\cdot\rangle$ is a periodic Abelian group.
\end {theorem}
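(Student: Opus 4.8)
The plan is to start from Theorem~\ref{ab gr s 1}, which tells me that an Abelian groupoid with identity is precisely a commutative semigroup with identity in which each equation $a\cdot x=b$ has at most one solution, i.e.\ a commutative cancellative monoid. I would then prove the two implications separately, the whole point being that in the forward direction the group structure is forced by the interaction of this cancellativity with periodicity.

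For the implication that a Hamiltonian such groupoid is a periodic Abelian group, I would first invoke Lemma~\ref{gam polugr period}: since $\langle A;\cdot\rangle$ is a Hamiltonian semigroup, for each $a\in A$ there are $1\le i<j$ with $a^i=a^j$. Rewriting this via associativity and commutativity as $a^i\cdot 1=a^i\cdot a^{j-i}$ and using that the equation $a^i\cdot x=a^i$ has at most one solution, while both $x=1$ and $x=a^{j-i}$ satisfy it, I get $a^{j-i}=1$. Hence every element has finite order and $a^{j-i-1}$ is an inverse of $a$ (with the convention $a^0=1$), so $\langle A;\cdot\rangle$ is a group; it is commutative by Theorem~\ref{ab gr s 1} and periodic by construction.

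For the converse I would take a periodic Abelian group $\langle A;\cdot\rangle$. It is an Abelian algebra by the sufficiency part of Theorem~\ref{ab gr s 1}, since it is a commutative semigroup with identity in which $a\cdot x=b$ has the unique solution $x=a^{-1}\cdot b$. To establish the Hamiltonian property I would observe that periodicity forces every nonempty subalgebra $B$ to be a subgroup: for $b\in B$ one has $b^m=1$ for some $m\ge 1$, whence $1\in B$ and $b^{-1}=b^{m-1}\in B$. Then for any subgroup $H$ the relation $a\,\Theta\,b\Leftrightarrow a\cdot b^{-1}\in H$ is a congruence of $\langle A;\cdot\rangle$ whose class of the identity is exactly $H$, so $H$ is a block of $\Theta$ and every subalgebra is a congruence block.

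The main obstacle is the forward direction: neither periodicity (Lemma~\ref{gam polugr period}) nor cancellativity alone is enough to produce inverses, and the crux is the single cancellation step that turns $a^i=a^j$ into $a^{j-i}=1$. Once inverses exist the remaining claims (group, commutative, periodic) are immediate, and the converse is routine, resting only on the familiar fact that in a periodic Abelian group every $\cdot$-closed set is already closed under inverses and every subgroup is a congruence block.
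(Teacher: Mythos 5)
Your proof is correct and follows essentially the same route as the paper: Theorem~\ref{ab gr s 1} together with Lemma~\ref{gam polugr period} yields $a^{j-i}=1$ and hence inverses in the forward direction, and in the converse periodicity makes every subalgebra a subgroup, hence a block of the coset congruence. The only cosmetic difference is that you derive $a^{j-i}=1$ from the cancellation property recorded in Theorem~\ref{ab gr s 1}, whereas the paper applies the term condition directly to $1\cdot a^i=a^{j-i}\cdot a^i$; the two steps are interchangeable.
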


\begin {proof} Suppose $\langle A;\cdot\rangle$ is an Abelian Hamiltonian groupoid with
the identity. By Theorem \ref{ab gr s 1} $\langle A;\cdot\rangle$
is a semigroup. Let $a\in A $. By Lemma \ref {gam polugr period}
there exist $i, j $, $1\leq i <j $, such that $a^i=a^j $, i.e.
$1\cdot a^i=a ^ {j-i} \cdot a^i $. As the semigroup $\langle
A;\cdot\rangle$ is an Abelian algebra then $1\cdot 1=a ^ {j-i}
\cdot 1 $, i.e. $1=a ^ {j-i}$. In particular for any $a, b\in A $
the equation $ax=b $ has the solution. Then by Theorem \ref{ab gr
s 1} and by Lemma \ref {gam polugr period} $\langle
A;\cdot\rangle$ is a periodic Abelian group.

Assume that $\langle A;\cdot\rangle$ is a periodic Abelian group.
Then any subgroupoid $\langle A;\cdot\rangle$ contains an
identity, that is it is a subgroup, and any its subgroup is a
normal subgroup, i.e. a class of some congruence of the groupoid
$\langle A;\cdot\rangle$.
\end {proof}

\section {Quasigroups}

Suppose $\langle A;\cdot\rangle$ is a quasigroup, $a\in A $. Let
us put (see \cite {belous})
 $$ R_a (x) = x \cdot a, \; \; L_a (x) = a \cdot x,
\; \; x + y = R_a ^ {-1} (x) \cdot L_a ^ {-1} (y). $$
 It is clear that $ R_a (x) $ and $L_a (x) $ are the permutations of a set $A $ and $
\langle A; + \rangle $ is a quasigroup.

\begin {remark} \label {lupa}
\cite {belous} Let $\langle A;\cdot\rangle$ be a quasigroup, $a\in
A $. Then

1) $ \langle A; + \rangle $ is a loop with an identity element $0
= a \cdot a $;

2) the equalities $r_a (x) + a = R_a ^ {-1} (x) $ and $l_a (x) + a
= L_a ^ {-1} (x) $ define the permutations $r_a (x)$ and $l_a (x)$
of a set $A $;

3) $x \cdot y = R_a (x) + L_a (y) $ for any $x, y\in A $;

4) $R_a (a) =L_a (a) =0 $;

5) $R_a ^ {-1} (0) =L_a ^ {-1} (0) = a $.
\end {remark}

\begin {lemma} \label {R L +} Let $\langle A;\cdot\rangle$ be a finite quasigroup, $a\in
A $. Then the functions $R_a ^ {-1} (x) $, $L_a ^ {-1} (x) $ and
$x+y $ on a set $A $ are determined by the polynomial operations
on an algebra $\langle A;\cdot\rangle$.
\end {lemma}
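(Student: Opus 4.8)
The plan is to exploit finiteness to turn the inverse permutations $R_a^{-1}$ and $L_a^{-1}$ into iterated compositions of the polynomial maps $R_a$ and $L_a$. First I would observe that, since $\langle A;\cdot\rangle$ is a quasigroup, the maps $R_a(x)=x\cdot a$ and $L_a(x)=a\cdot x$ are permutations of $A$ (the equations $x\cdot a=b$ and $a\cdot x=b$ each have a unique solution); moreover each is a unary polynomial operation, being the basic operation with the constant $a$ substituted into one argument. Consequently every iterate $R_a^{k}$ and $L_a^{k}$ with $k\ge 1$ is again a polynomial operation, obtained by repeatedly applying the term $x\cdot a$ (respectively $a\cdot x$) to itself.

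The key step uses finiteness. As $A$ is finite, $R_a$ lies in the finite symmetric group on $A$ and therefore has finite order, say $n$, so that $R_a^{n}=\mathrm{id}_A$ and hence $R_a^{-1}=R_a^{\,n-1}$. Likewise $L_a^{-1}=L_a^{\,m-1}$, where $m$ is the order of $L_a$. Since $R_a^{\,n-1}$ and $L_a^{\,m-1}$ are polynomial operations by the previous paragraph, both $R_a^{-1}$ and $L_a^{-1}$ are realized by polynomial operations of $\langle A;\cdot\rangle$.

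Finally, with these two facts in hand, the addition is immediate: by definition $x+y=R_a^{-1}(x)\cdot L_a^{-1}(y)$, which is the composition of the polynomial operations $R_a^{-1}$ and $L_a^{-1}$ with the basic operation $\cdot$, hence itself a binary polynomial operation. I do not expect any real obstacle here; the only point requiring care is that the passage from $R_a^{-1}$ to a positive power of $R_a$ genuinely needs finiteness of $A$ (for an infinite quasigroup a permutation need not have finite order), which is exactly the standing hypothesis of the lemma.
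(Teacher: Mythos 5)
Your proof is correct and follows essentially the same route as the paper: both arguments use finiteness to conclude that the permutations $R_a$ and $L_a$ have finite order, so that $R_a^{-1}=R_a^{\,n-1}$ and $L_a^{-1}=L_a^{\,m-1}$ are realized by iterating the unary polynomials $x\cdot a$ and $a\cdot x$, after which $x+y=R_a^{-1}(x)\cdot L_a^{-1}(y)$ is immediate. No discrepancies to report.
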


\begin {proof}
Suppose $\langle A;\cdot\rangle$ is a finite quasigroup and $a \in
A $. Let $g_a^k (x) = (\ldots ((x \cdot a) \cdot a) \cdot\ldots)
\cdot a$ where $a$ occurs exactly $k $ times. Since a set $
\{g_a^0 (x), g_a^1 (x), \ldots, g_a^k (x), \ldots \}$ is finite
for all $x\in A $ then there exists $n> 0 $ (which does not
depends on $x $) such that $g_a^n (x) =g_a ^ {n-1} (x) \cdot a = x
$. As $R_a ^ {-1} (x) \cdot a = x $ then $g_a ^ {n-1} (x) = R_a ^
{-1} (x) $. Similarly, $h_a ^ {k-1} (y) = L_a ^ {-1} (y) $ for
some polynomial operation $h_a ^ {k-1} (y) $ on an algebra
$\langle A;\cdot\rangle$. Therefore $x + y = R_a ^ {-1} (x) \cdot
L_a ^ {-1} (y) = g_a ^ {n-1} (x) \cdot h_a ^ {k-1} (y) $, that is
a function $x + y $ is determined by a polynomial operation $g_a ^
{n-1} (x) \cdot h_a ^ {k-1} (y) $ on an algebra $\langle
A;\cdot\rangle$.
\end {proof}

\begin {theorem} \label {abel kvasigr} Let $\langle A;\cdot\rangle$ be a finite quasigroup, $a\in A $.
The quasigroup $\langle A;\cdot\rangle$ is an Abelian algebra iff

1) $ \langle A; + \rangle $ is an Abelian group,

2) the permutations $r_a (x) $ and $l_a (x) $ are the
automorphisms of $ \langle A; + \rangle $.
\end {theorem}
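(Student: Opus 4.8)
The plan is to show that, for a finite quasigroup, being an Abelian algebra is equivalent to being \emph{affine} over the loop $\langle A;+\rangle$, i.e.\ to $\langle A;+\rangle$ being an Abelian group together with a representation $x\cdot y=\varphi(x)+\psi(y)+c$ in which $\varphi,\psi$ are automorphisms of $\langle A;+\rangle$ and $c$ is a constant; conditions (1)--(2) are then just the unpacking of this affine form through Remark \ref{lupa}. Throughout I will use that, by Lemma \ref{R L +}, the operations $+$, $R_a^{-1}$, $L_a^{-1}$ (and hence, via Remark \ref{lupa}(2), $r_a$ and $l_a$) are polynomial operations of $\langle A;\cdot\rangle$, so that statements about these maps may be fed into the term condition for $\langle A;\cdot\rangle$.

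For sufficiency I would start from (1)--(2). Using Remark \ref{lupa}(2),(3) and the hypothesis that $r_a,l_a\in\operatorname{Aut}(\langle A;+\rangle)$, I would rewrite the translations as affine maps, $R_a(x)=r_a^{-1}(x)-r_a^{-1}(a)$ and $L_a(y)=l_a^{-1}(y)-l_a^{-1}(a)$, and substitute into $x\cdot y=R_a(x)+L_a(y)$ to obtain $x\cdot y=r_a^{-1}(x)+l_a^{-1}(y)+c$ with $\varphi=r_a^{-1}$, $\psi=l_a^{-1}$ automorphisms and $c$ constant. Since the class of affine maps over an Abelian group is closed under composition and contains the constants, every polynomial operation of $\langle A;\cdot\rangle$ has the affine form $t(\bar x)=\sum_i\gamma_i(x_i)+d$ with $\gamma_i\in\operatorname{End}(\langle A;+\rangle)$. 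For such an operation the control variable $x$ enters through a single summand $\gamma(x)$ that can be cancelled in the Abelian group $\langle A;+\rangle$, so the equation $t(u,\bar c)=t(u,\bar d)$ reduces to an equality among the remaining (parameter) summands that does not involve $u$; this is exactly the term condition, whence $\langle A;\cdot\rangle$ is Abelian.

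For necessity, assume $\langle A;\cdot\rangle$ is Abelian. I would first upgrade Lemma \ref{R L +}: since $A$ is finite there is an $N$ with $L_x^{N}=\mathrm{id}$ and $R_x^{N}=\mathrm{id}$ for all $x$, so the divisions $x\backslash y=L_x^{-1}(y)=L_x^{N-1}(y)$ and $x/y=R_y^{-1}(x)=R_y^{N-1}(x)$ are term operations; hence $m(x,y,z)=\bigl(x/(y\backslash y)\bigr)\cdot(y\backslash z)$ is a Mal'cev polynomial (one checks $m(x,x,z)=z$ and $m(x,y,y)=x$ directly). An Abelian algebra with a Mal'cev polynomial is affine over some Abelian group $\langle A;\oplus\rangle$ \cite{hm}, so every polynomial, in particular $\cdot$, $R_a^{-1}$ and $L_a^{-1}$, is $\oplus$-affine. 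Writing $x+y=R_a^{-1}(x)\cdot L_a^{-1}(y)$ and composing the affine forms, I get $x+y=\Gamma(x)\oplus\Delta(y)\oplus e'$ with $\Gamma,\Delta\in\operatorname{Aut}(\langle A;\oplus\rangle)$; imposing the two loop identities $0+y=y$ and $x+0=x$ forces $\Gamma=\Delta=\mathrm{id}$, whence $+$ is a translate of $\oplus$ and $\langle A;+\rangle$ is an Abelian group. Finally $R_a^{-1},L_a^{-1}$ are affine over $+$, so $r_a(x)=R_a^{-1}(x)-a$ and $l_a(x)=L_a^{-1}(x)-a$ are affine; Remark \ref{lupa}(5) gives $r_a(0)=l_a(0)=0$, which kills the constant term and shows $r_a,l_a\in\operatorname{Aut}(\langle A;+\rangle)$, establishing (1)--(2).

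The main obstacle is the necessity direction, and within it the step of reconciling the \emph{abstract} module addition $\oplus$ produced by the affine representation with the \emph{prescribed} loop operation $+$, together with showing that $r_a,l_a$ have vanishing constant term. The loop unit laws and Remark \ref{lupa}(5) are precisely the leverage that pins $\oplus$ to $+$ and forces the affine parts to be genuine automorphisms; without a unit element one would recover affineness only up to an undetermined translation and could not conclude. A fully elementary alternative, avoiding the commutator machinery, would instead derive commutativity and associativity of $+$ and the identities $r_a(x+y)=r_a(x)+r_a(y)$ directly from the term condition by the bookkeeping of Theorem \ref{ab gr s 1}, choosing the control variable and the two parameter valuations so that the evaluated instances reproduce the two sides of each identity; the delicate point there is the non-associativity of a general loop, which is exactly what the Mal'cev/affine route sidesteps.
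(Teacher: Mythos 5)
Your proof is correct, and your sufficiency direction is essentially the paper's: both of you show that every polynomial operation of $\langle A;\cdot\rangle$ has the affine form $\alpha_0(x_0)+\dots+\alpha_n(x_n)+d$ with the $\alpha_i$ endomorphisms of $\langle A;+\rangle$ (the paper does this by explicit induction on complexity, you by closure of affine maps under composition), and then read off the term condition by cancellation in the Abelian group. The necessity direction is where you genuinely diverge. The paper stays elementary: since $+$, $R_a^{-1}$, $L_a^{-1}$ are polynomial operations of $\langle A;\cdot\rangle$ by Lemma \ref{R L +}, the loop $\langle A;+\rangle$ is itself an Abelian groupoid with identity, so Theorem \ref{ab gr s 1} makes it a commutative semigroup and hence (being a quasigroup) an Abelian group; the homomorphism law $r_a(b+c)=r_a(b)+r_a(c)$ is then extracted by a single application of the term condition to the trivially valid instance $R_a^{-1}(0+c)+R_a^{-1}(0)=R_a^{-1}(0+0)+R_a^{-1}(c)$, replacing the control value $0$ by $b$ and unwinding via $r_a(x)+a=R_a^{-1}(x)$. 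You instead manufacture a Mal'cev polynomial from the divisions and invoke the fundamental theorem of Abelian algebras from \cite{hm} to get an affine representation over an abstract group $\oplus$, then pin $\oplus$ to $+$ via the loop unit laws and kill the constant terms of $r_a,l_a$ using Remark \ref{lupa}(5). Your route checks out (the Mal'cev identities for $m$ hold, and the reconciliation argument works), but note two things: the uniform exponent $N$ must be taken as a common multiple over all $x\in A$, and the endomorphisms delivered by the affine representation are a priori $\oplus$-endomorphisms, so one must verify they become $+$-endomorphisms after the translation relating $\oplus$ and $+$ --- this does follow once the constant terms vanish, but it deserves a line. What your approach buys is conceptual transparency (Abelian plus Mal'cev equals affine) and independence from the specific $R_a,L_a$ bookkeeping; what the paper's buys is a short, self-contained argument that reuses Theorem \ref{ab gr s 1} and avoids the commutator-theoretic machinery entirely.
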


\begin {proof}
Suppose $\langle A;\cdot\rangle$ is a finite Abelian quasigroup
and $a \in A $. By Lemma \ref {R L +} the functions $R_a ^ {-1}
(x) $, $L_a ^ {-1} (x) $ and $x+y $ are determined by the
polynomial operations on an algebra $\langle A;\cdot\rangle$. So
an algebra $ \langle A; + \rangle $ is Abelian. By Remark \ref
{lupa}$ \langle A; + \rangle $ is a loop with an identity element
0. Then by Theorem \ref{ab gr s 1}$ \langle A; + \rangle $ is an
Abelian group.

Let us show that the permutation $r_a (x) $ is an automorphism of
an Abelian group $ \langle A; + \rangle $. It is enough to proof
that $r_a (x) $ is a homomorphism of this group. Let $b, c\in A $.
In Abelian group $ \langle A; + \rangle $ equality
 $$ R_a ^ {-1} (0+c) +R_a ^ {-1} (0) =R_a ^ {-1} (0+0) +R_a ^ {-1} (c) $$
 holds.
Since an algebra $\langle A;\cdot\rangle$ is Abelian and the
functions $R_a ^ {-1} (x) $ and $x+y $ are determined by the
polynomial operations on an algebra $\langle A;\cdot\rangle$ then
 $$ R_a ^ {-1} (b+c) +R_a ^ {-1} (0) =R_a ^ {-1} (b+0) +R_a ^ {-1} (c.) $$
By definition of the permutation $r_a (x) $ and by Remark \ref
{lupa}
 $$ (r_a (b+c) +a) +a = (r_a (b) +a) + (r_a (c) +a), $$
 $$ r_a (b+c) =r_a (b) +r_a (c). $$
 Similarly $l_a (x)$ is an automorphism of Abelian
 group $ \langle A; + \rangle $.

Let us prove sufficiency. By $R $ denote a ring of endomorphisms
of a groups $ \langle A; + \rangle $, generated by the
automorphisms $r_a^1 $ and $l_a^1 $. We will show that for any
polynomial operation $f (x_0, \ldots, x_n) $ on an algebra
$\langle A;\cdot\rangle$ there exist $d\in A $ and $ \alpha_1,
\ldots, \alpha_n\in R $ such that for any $x_0, \ldots, x_n\in A $
the following holds:
$$ f (x_0, \ldots, x_n) = \alpha_0 (x_0) + \ldots + \alpha_n (x_n)
+ d.\eqno (1) $$ An induction on the complexity of $f (x_0,
\ldots, x_n) $. If $f (x_0, \ldots, x_n) =x_0 $ than (1) is
obvious. Let $f (x_0, \ldots, x_n) = g (x_0, \ldots, x_n) \cdot h
(x_0, \ldots, x_n) $. By the induction hypothesis
$$ g (x_0,
\ldots, x_n) = \beta_0 (x_0) + \ldots + \beta_n (x_n) + b, $$
$$ h (x_0,
\ldots, x_n) = \gamma_0 (x_0) + \ldots + \gamma_n (x_n) + c, $$
where $ \beta_i, \gamma_i\in R $. Then using Remark \ref {lupa} we
get
$$ f (x_0,
\ldots, x_n) =g (x_0, \ldots, x_n) \cdot h (x_0, \ldots, x_n) = $$
$$ =R_a (g (x_0, \ldots, x_n)) +L_a (h (x_0, \ldots, x_n)) = $$
$$ =r_a ^ {-1} (g (x_0, \ldots, x_n)-a) +l_a ^ {-1} (h (x_0, \ldots,
x_n)-a) = $$ $$ =r_a ^ {-1} (g (x_1, \ldots, x_n))-r_a ^ {-1} (a)
+l_a ^ {-1} (h (x_0, \ldots, x_n))-l_a ^ {-1} (a) = $$
$$ =r_a ^ {-1} (\beta_0 (x_0) + \ldots + \beta_n (x_n) +
b) +l_a ^ {-1} (\gamma_0 (x_0) + \ldots + \gamma_n (x_n) + c) +r =
$$
$$ =r_a ^ {-1} (\beta_0 (x_0)) + \ldots + r_a ^ {-1} (\beta_n (x_n))
+l_a ^ {-1} (\gamma_0 (x_0)) + \ldots + l_a ^ {-1} (\gamma_n
(x_n)) + d = $$
$$ = (r_a ^ {-1} \beta_0+l_a ^ {-1} \gamma_0) (x_0) + \ldots +
(r_a ^ {-1} \beta_n+l_a ^ {-1} \gamma_n) (x_n) + d $$
 for some
$r, d\in A $. Clearly $r_a ^ {-1} \beta_i+l_a ^ {-1} \gamma_i\in R
$. Thus, (1) it is proved.

Let us show that $\langle A;\cdot\rangle$ is an Abelian algebra.
Suppose $f (x_0, x_1, \ldots, x_n) $ is a polynomial operation on
this algebra and $f (b, r_1, \ldots, r_n) = f (b, s_1, \ldots,
s_n) $, where $b, r_1, \ldots, r_n, s_1, \ldots, s_n\in A $. Using
(1) we receive
 $$\alpha_0 (b) + \alpha_1 (r_1) + \ldots + \alpha_n
(r_n) + d =\alpha_0 (b) + \alpha_1 (s_1) + \ldots + \alpha_n (s_n)
+ d. $$ Then
$$\alpha_1 (r_1) + \ldots +
\alpha_n (r_n) + d = \alpha_1 (s_1) + \ldots + \alpha_n (s_n) + d.
$$ Hence
$$\alpha_0 (c) + \alpha_1 (r_1) + \ldots +
\alpha_n (r_n) + d =\alpha_0 (c) + \alpha_1 (s_1) + \ldots +
\alpha_n (s_n) + d $$ for any $c\in A $. Thus $\langle
A;\cdot\rangle$ is an Abelian algebra.
\end {proof}

The following proposition gives us some necessary condition for a
finite quasigroup to be Abelian. This condition will be used for
construction the examples in this section.

\begin {proposition} \label {diagon ab kvaz} If $\langle A;\cdot\rangle$
is an Abelian quasigroup then there exists $n\in \omega $ such
that either $ \{x\in A \mid x^2=a \} = \emptyset $ or $ | \{x\in
A\mid x^2=a \} | = n $ for any $a\in A $.
\end {proposition}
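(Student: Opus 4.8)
The plan is to reduce the squaring operation $x\mapsto x^2$ to an affine map of an Abelian group and then observe that the fibres of such a map are cosets of a fixed subgroup, hence all of the same cardinality. First I would fix an element $a\in A$ and invoke Theorem \ref{abel kvasigr}: since $\langle A;\cdot\rangle$ is a (finite) Abelian quasigroup, $\langle A;+\rangle$ is an Abelian group with identity $0=a\cdot a$, and the permutations $r_a$, $l_a$ are automorphisms of $\langle A;+\rangle$.

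Next I would rewrite the product additively. By Remark \ref{lupa} we have $x\cdot y=R_a(x)+L_a(y)$, while $R_a^{-1}(x)=r_a(x)+a$ and $L_a^{-1}(y)=l_a(y)+a$. Solving these for $R_a$ and $L_a$ and using that $r_a^{-1},l_a^{-1}$ are automorphisms gives $R_a(x)=r_a^{-1}(x)-r_a^{-1}(a)$ and $L_a(y)=l_a^{-1}(y)-l_a^{-1}(a)$. Hence, writing $\alpha=r_a^{-1}$ and $\beta=l_a^{-1}$ (automorphisms of $\langle A;+\rangle$) and collecting the constants into a single $e\in A$, one obtains $x\cdot y=\alpha(x)+\beta(y)+e$ for all $x,y\in A$. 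In particular $x^2=\gamma(x)+e$, where $\gamma=\alpha+\beta$ is an endomorphism of the Abelian group $\langle A;+\rangle$, since a sum of two endomorphisms of an Abelian group is again an endomorphism (commutativity is used to reorder terms).

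The key observation is then purely about the endomorphism $\gamma$. For an arbitrary $b\in A$ (this is the element called $a$ in the statement) we have $\{x\in A\mid x^2=b\}=\{x\mid\gamma(x)=b-e\}=\gamma^{-1}(b-e)$. A nonempty fibre of a group endomorphism is a coset of its kernel: if $\gamma(x_0)=\gamma(x_1)$ then $x_0-x_1\in\ker\gamma$, so $\gamma^{-1}(c)=x_0+\ker\gamma$ whenever it is nonempty, of cardinality $n:=|\ker\gamma|$. As $b$ ranges over $A$, the element $b-e$ ranges over all of $A$, so every set $\{x\mid x^2=b\}$ equals some $\gamma^{-1}(c)$ and is therefore either empty or a coset of $\ker\gamma$, i.e.\ of cardinality $n$. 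This is exactly the claim, with $n=|\ker\gamma|$.

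I expect the only delicate point to be the passage in the second paragraph from the translations $R_a,L_a$ to the affine form $x\cdot y=\alpha(x)+\beta(y)+e$, where one must track the constants carefully and keep in mind that the representation rests on Theorem \ref{abel kvasigr}; finiteness of $A$, which is the setting of this section, is what guarantees $n=|\ker\gamma|\in\omega$. Once the representation $x^2=\gamma(x)+e$ is in hand, the fibre count is the routine coset argument for a homomorphism of Abelian groups.
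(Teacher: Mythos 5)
Your argument is correct for a \emph{finite} Abelian quasigroup, and in that setting it is a genuinely different and rather more structural proof than the paper's. The constant-tracking in your second paragraph does check out: from $R_a^{-1}(x)=r_a(x)+a$ one gets $R_a(x)=r_a^{-1}(x)-r_a^{-1}(a)$ and likewise for $L_a$, so $x\cdot y=\alpha(x)+\beta(y)+e$ with $\alpha=r_a^{-1}$, $\beta=l_a^{-1}$ automorphisms and $e=-r_a^{-1}(a)-l_a^{-1}(a)$; then $x^2=\gamma(x)+e$ with $\gamma=\alpha+\beta$ an endomorphism, and the nonempty fibres are cosets of $\ker\gamma$, which even identifies $n$ explicitly as $|\ker(r_a^{-1}+l_a^{-1})|$. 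The paper argues quite differently and much more economically, directly from the term condition: listing the square roots $a_1,\dots,a_n$ of $a$ and $b_1,\dots,b_m$ of $b$, writing $a_i=c\cdot d_i$ with the $d_i$ distinct, choosing $r$ with $b_1=r\cdot d_1$, and applying the Abelian property to $t(x,y_1,y_2)=(x\cdot y_1)\cdot(x\cdot y_2)$ to the equalities $(c\cdot d_i)^2=(c\cdot d_j)^2$; this forces $(r\cdot d_i)^2=b$ for all $i$, giving an injection from the square roots of $a$ into those of $b$, and symmetry finishes. The one real caveat against your route: the proposition is stated for an \emph{arbitrary} Abelian quasigroup, whereas Theorem \ref{abel kvasigr} (and Lemma \ref{R L +} behind it, which needs finiteness to realize $R_a^{-1}$ as a polynomial) is only available for finite $A$; so your proof does not cover the infinite case, while the paper's term-condition argument uses no finiteness at all. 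Since the proposition is only applied in the paper to two $4$-element examples, your proof suffices for its intended use, but as a proof of the stated proposition it is weaker than the paper's; what it buys in exchange is a conceptual explanation of \emph{why} the fibres are equinumerous (they are cosets of a kernel) rather than just an injection argument.
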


\begin {proof} Let $a, b, a_i, b_j\in A $ such that $a_i^2=a $,
$b_j^2=b $, where $1\leq i\leq n $, $1\leq j\leq m $ and $n\geq m
$. Choose $c\in A $. Since $\langle A;\cdot\rangle$ is a
quasigroup then there exist distinct $d_1\ldots, d_n\in A $ such
that $a_i=c\cdot d_i $ for all $i $, $1\leq i\leq n $. Then
$$ (c
\cdot d_i) \cdot (c \cdot d_i) = (c \cdot d_j) \cdot (c \cdot d_j)
$$ for any $i, j $, $1\leq i\leq n $, $1\leq j\leq m $. Let
$r\in A $ be an element such that $b_1=r\cdot d_1 $. By Abelian
property for $\langle A;\cdot\rangle$ we have
$$ (r
\cdot d_1) \cdot (r \cdot d_1) = (r \cdot d_j) \cdot (r \cdot
d_j), $$ that is $b = (r\cdot d_j) ^2 $ for any $j $, $1\leq j\leq
m $. Since the elements $rd_1\ldots, rd_n $ are distinct then
$n\leq m $. The proposition is proved.
\end {proof}

\begin {theorem} \label {gam kvasigr} Every finite Abelian
quasigroup is a Hamiltonian algebra.
\end {theorem}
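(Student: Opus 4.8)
The plan is to exploit the affine representation established in Theorem~\ref{abel kvasigr}. Fix $a\in A$; by that theorem $\langle A;+\rangle$ is an Abelian group and $r_a,l_a$ are its automorphisms, so by Remark~\ref{lupa}(2)--(3) every product takes the affine form
$$ x\cdot y = R_a(x)+L_a(y) = r_a^{-1}(x)+l_a^{-1}(y)+e $$
for a fixed constant $e\in A$, where $r_a^{-1},l_a^{-1}\in R$. First I would settle the congruence side of the picture: for any $R$-submodule $N$ of $\langle A;+\rangle$ (that is, a subgroup invariant under $r_a$ and $l_a$), the coset relation $\Theta_N$ is a congruence of $\langle A;\cdot\rangle$, because $x-x'\in N$ and $y-y'\in N$ force $(x\cdot y)-(x'\cdot y')=r_a^{-1}(x-x')+l_a^{-1}(y-y')\in N$; and the blocks of $\Theta_N$ are exactly the cosets $c+N$.

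Given this, the theorem reduces to showing that every subalgebra $B$ is such a coset. Since $A$ is finite and $B$ is closed under $\cdot$, $B$ is a subquasigroup. The delicate point, and the place I expect the main obstacle, is that $B$ is only assumed closed under the single operation $\cdot$: it need not contain $a$, nor be closed under $+$, $r_a$, or $l_a$, each of which is a polynomial built with the outside constant $a$ (Lemma~\ref{R L +}). Hence the submodule structure of the appropriate translate of $B$ must be extracted purely from closure under $\cdot$.

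To do this I would fix $b_0\in B$ and set $0_B=b_0\cdot b_0\in B$ and $N_0=B-0_B$. Forming the principal isotope of $B$ at $b_0$ as in Remark~\ref{lupa}, a direct computation using that $r_a^{-1}$ and $l_a^{-1}$ are additive collapses the resulting loop operation on $B$ to $x\oplus y=x+y-0_B$; since $\oplus$ maps $B\times B$ into $B$, this exhibits $N_0$ as closed under $+$, so (being finite and containing $0$) it is a subgroup of $\langle A;+\rangle$ and $B=0_B+N_0$. It then remains to show $N_0$ is $R$-invariant. For $n\in N_0$ I would evaluate $(0_B+n)\cdot b_0 = r_a^{-1}(n)+(0_B\cdot b_0)$; writing $0_B\cdot b_0=0_B+m$ with $m\in N_0$ and using that $(0_B+n)\cdot b_0\in B=0_B+N_0$ yields $r_a^{-1}(n)\in N_0$, whence $r_a^{-1}(N_0)\subseteq N_0$ and, by finiteness, $N_0$ is $r_a$-invariant. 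The symmetric computation with $b_0\cdot(0_B+n)$ gives $l_a$-invariance, so $N_0$ is an $R$-submodule.

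Finally, with $N_0$ an $R$-submodule the relation $\Theta_{N_0}$ is a congruence of $\langle A;\cdot\rangle$ whose blocks are the cosets of $N_0$, and $B=0_B+N_0$ is one such coset; thus $B$ is a block of $\Theta_{N_0}$. As $B$ was an arbitrary subalgebra, $\langle A;\cdot\rangle$ is Hamiltonian. The only genuinely nontrivial steps are the collapse of the isotope operation to $x\oplus y=x+y-0_B$ and the two translation identities yielding $R$-invariance of $N_0$; everything else is bookkeeping against the affine form above.
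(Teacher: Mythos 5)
Your proof is correct. Both you and the paper ultimately exhibit $B$ as a coset of a subgroup of $\langle A;+\rangle$ invariant under $r_a$ and $l_a$, and both use the fact that the coset partition of such a subgroup is a congruence of $\langle A;\cdot\rangle$; your verification of that last fact, via the affine form $x\cdot y=r_a^{-1}(x)+l_a^{-1}(y)+e$, is in fact cleaner than the paper's, which checks compatibility through the chain of identities $c\cdot B=(c\cdot a)+B$, $B\cdot c=(a\cdot c)+B$, $r_a(B)=l_a(B)=B$. The real divergence is in how the subgroup is produced. You keep the isotope base point $a$ fixed and global, and therefore have to do genuine work on the translate $N_0=B-0_B$: the collapse of the principal isotope of $B$ at $b_0$ to $x\oplus y=x+y-0_B$, and the two translation identities giving $r_a$- and $l_a$-invariance. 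All of these computations check out (e.g.\ $(0_B+n)\cdot b_0=(0_B\cdot b_0)+r_a^{-1}(n)$ does force $r_a^{-1}(n)\in N_0$). The paper instead re-bases: it chooses the isotope parameter $a$ to lie \emph{inside} $B$, so that $0=a\cdot a\in B$ and, since $R_a^{-1}$, $L_a^{-1}$ and $+$ are polynomial operations of $\langle A;\cdot\rangle$ whose only constant is $a\in B$ (Lemma~\ref{R L +}), the subalgebra $B$ is automatically closed under them; thus $\langle B;+\rangle$ is itself the required subgroup and no translate is needed. The obstacle you flag as ``the delicate point'' is precisely what this choice dissolves. Your route costs a few extra computations but has the small advantage of working with a single fixed group structure $\langle A;+\rangle$ for all subalgebras at once, rather than a different isotope for each $B$.
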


\begin {proof} Let $\langle A;\cdot\rangle$
be a finite Abelian quasigroup, $ \langle B; \cdot\rangle $ be a
subalgebra and $a\in B $. By Theorem \ref{abel kvasigr} $ \langle
A; + \rangle $ is an Abelian group with a identity element
$0=a\cdot a $. It is clear that $0\in B $. By Lemma \ref {R L +}
the functions $R_a ^ {-1} (x) $, $L_a ^ {-1} (x) $ and $x+y $ on a
set $A $ are determined by the polynomial operations on an algebra
$\langle A;\cdot\rangle$ moreover it follows from the proof of the
Lemma that this operations depend on a unique element of a set $A
$, an element $a $. Hence a set $B $ is closed under the
operations $R_a ^ {-1} (x) $, $L_a ^ {-1} (x) $, $x+y $ and $
\langle B; + \rangle $ is a subgroup of a group $ \langle A; +
\rangle $.

We claim that $ \langle B; \cdot\rangle $ is a quasigroup. Note
that the equations $x\cdot a=c $ and $a\cdot x=c $, where $c\in B
$, have solutions $R_a ^ {-1} (c) \in B $ and $L_a ^ {-1} (c) \in
B $ accordingly. The equation $x\cdot b=c $, where $b, c\in B $,
is equivalent to the equation $R_a (x) +L_a (b) =c $, that is
$x\cdot a+a\cdot b=c $. In a group $ \langle B; + \rangle $ there
is an element $d $ such that $d+a\cdot b=c $. The equation $x\cdot
a=d $ is solvable in $ \langle B; \cdot\rangle $. Hence the
equation $x\cdot b=c $ is solvable too in $ \langle B;
\cdot\rangle $. Similarly, the equation $b\cdot x=c $ is solvable
in $ \langle B; \cdot\rangle $, that is $ \langle B; \cdot\rangle
$ is a quasigroup.

We claim that a partition of a group $ \langle A; + \rangle $ into
the cosets  of a subgroup $ \langle B; + \rangle $ defines the
congruence on a quasigroup $\langle A;\cdot\rangle$. Assume that
$b\in B $ and $c\in A $. There exists $d\in B $ such that
$b=a\cdot d $. Then $ (c\cdot a) +b = (c\cdot a) + (a\cdot d)
=c\cdot d\in c\cdot B $. Moreover $c\cdot b = (c\cdot a) + (a\cdot
b) \in (c\cdot a) +B $. Hence $c\cdot B = (c\cdot a) +B $.
Similarly $B\cdot c = (a\cdot c) +B $. Therefore for any $c\in A $
there is $d\in A $ such that $c\cdot B=B\cdot d $. By Theorem
\ref{abel kvasigr} the permutations $r_a (x) $ and $l_a (x) $ are
the automorphisms of a group $ \langle A; + \rangle $. As  $B $ is
closed under the operations $R_a ^ {-1} (x) $, $L_a ^ {-1} (x) $
and from the definition of the permutations $r_a (x) $ and $l_a
(x) $ we obtain $r_a (B) =B $ and $l_a (B) =B $. Hence the
equality $ (c\cdot a) +B = (d\cdot a) +B $ is equivalent to the
equalities $r_a (c\cdot a) +B=r_a (d\cdot a) +B $, $ (c-a) +B =
(d-a) +B $ and $c+B=d+B $. Similarly the equality $ (a\cdot c) +B
= (a\cdot d) +B $ is equivalent to the equality $c+B=d+B $. Let
$c+B=d+B $ and $c '+B=d' +B $. Then $ (c\cdot a) +B = (d\cdot a)
+B $ and $ (a\cdot c ') +B = (a\cdot d ') +B $. So $ ((c\cdot a) +
(a\cdot c ')) +B = ((d\cdot a) + (a\cdot d ')) +B $, i.e. $
(c\cdot c ') +B = (d\cdot d ') +B $. Thus the partition of a group
$ \langle A; + \rangle $ into the cosets  of a subgroup $ \langle
B; + \rangle $ defines the congruence on a quasigroup $\langle
A;\cdot\rangle$ and $\langle A;\cdot\rangle$ is a Hamiltonian
algebra.
\end {proof}

The following \emph {example} shows that a condition  \emph {2)}
of Theorem \ref {abel kvasigr} is essential. Let a quasigroup $
\langle Q; \cdot\rangle $ is defined by the Cayley table:

\begin {center}

\begin{tabular}{c|cccc}
  $\cdot$ & 0 & 1 & 2 & 3 \\
  \hline
  0 & 1 & 3 & 2 & 0 \\
  1 & 2 & 0 & 3 & 1 \\
  2 & 0 & 2 & 1 & 3 \\
  3 & 3 & 1 & 0 & 2 \\

\end{tabular}
\end {center}
Let us construct a loop $ \langle Q; +\rangle $ chosen as $a $ an
element $1\in Q $:

\begin {center}

\begin{tabular}{c|cccc}
  + & 0 & 1 & 2 & 3 \\
  \hline
  0 & 0 & 1 & 2 & 3 \\
  1 & 1 & 2 & 3 & 0 \\
  2 & 2 & 3 & 0 & 1 \\
  3 & 3 & 0 & 1 & 2 \\

\end{tabular}
\end {center}
Then $ \langle Q; +\rangle $ is the residue class group modulo 4,
i.e. it is an Abelian group, and the permutation $r_1 (x) $ is not
an automorphism of this group: $r_1 (1) =R_1 ^ {-1} (1)-1=3-1=2 $,
$r_1 (1) +r_1 (1 =0 $, $r_1 (1+1) =r_1 (2) =R_1 ^ {-1} (2)-1=2-1=1
$. By Proposition \ref {diagon ab kvaz} a quasigroup $ \langle Q;
\cdot\rangle $ is not Abelian. Notice that a quasigroup $ \langle
Q; \cdot\rangle $ is not Hamiltonian too. Really, $ \langle \{0,1
\};\cdot\rangle $ is a subalgebra of $ \langle Q; \cdot\rangle $,
$2\cdot \{0,1 \} = \{0,2 \}$, so a subalgebra $ \langle \{0,1
\};\cdot\rangle $ is not a block of congruence of a quasigroup $
\langle Q; \cdot\rangle $.

A groupoid
\begin {center}

\begin{tabular}{c|cccc}
  $\cdot$ & 0 & 1 & 2 & 3 \\
  \hline
  0 & 1 & 0 & 3 & 2 \\
  1 & 2 & 1 & 0 & 3 \\
  2 & 0 & 3 & 2 & 1 \\
  3 & 3 & 2 & 1 & 0 \\

\end{tabular}
\end {center}
is \emph {an example} of a grouppoid which is not Abelian (by
Proposition \ref {diagon ab kvaz}) and Hamiltonian (there is no
the more then one-element proper subalgeras) quasigroups.

\section {Semigroups}

Abelian and Hamiltonian semigroups are studied in this section. In
this section we will usually write $ab$ as alternatives to $a\cdot
b$, where $a $ and $b$ are the elements of semigroup $\langle
A,\cdot \rangle$.

The semigroup $\langle A,\cdot\rangle$ is called stationary \cite
{kp} if the equality $ub=uc $ implies $vb=vc $ and the equality
$bu=cu $ implies $bv=cv $ for all $u, v, b, c\in A $.

The following proposition we get directly from the definition of
Abelian algebra

\begin {proposition}\label{Razdu - otn equiv} A semigroup $\langle A,\cdot\rangle$ is Abelian
iff $\langle A,\cdot\rangle$ is stationary and for all $a, b, c,
d, u, v\in A $ the equality $aub=cud $ implies $avb=cvd $.
\end {proposition}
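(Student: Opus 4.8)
The plan is to prove both implications by relating the stated conditions to the Abelian property for three particular polynomial operations. Observe first that the two stationary conditions and the implication $aub=cud\Rightarrow avb=cvd$ are exactly the instances of the Abelian definition obtained from the term operations $t(x,y)=xy$, $t(x,y)=yx$, and $t(x,y_1,y_2)=y_1xy_2$ respectively, taking $x$ as the distinguished variable. Hence the direction from Abelian to the three conditions is immediate: I would simply substitute these three terms into the definition of an Abelian algebra and read off the implications.

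For the converse I would use the structure of semigroup polynomials. Fixing values $\bar c$ for the parameters, a polynomial operation $t(x,\bar y)$ evaluates to a word $t(x,\bar c)=w_0\,x\,w_1\,x\cdots x\,w_m$, where the number $m$ of occurrences of $x$ and the positions of the (possibly empty) constant blocks $w_i$ are determined by $t$ alone, so that $t(x,\bar d)=w_0'\,x\,w_1'\,x\cdots x\,w_m'$ has the same shape. If $m=0$ the implication is trivial, and if $m=1$ it is one of the three given conditions according to whether $x$ stands at the start, the end, or the interior of the word (using the first stationary condition, the second one, or $aub=cud\Rightarrow avb=cvd$, respectively).

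The main work is the case $m\ge 2$, and I would reduce it to single occurrences by replacing the copies of $u$ one at a time. Assuming $t(u,\bar c)=t(u,\bar d)$, for $0\le k\le m$ set $L_k=w_0\,v\,w_1\cdots w_{k-1}\,v\,w_k\,u\,w_{k+1}\cdots u\,w_m$ (the first $k$ copies of $x$ evaluated at $v$, the rest at $u$) and define $R_k$ analogously with the blocks $w_i'$. Then $L_0=t(u,\bar c)=t(u,\bar d)=R_0$, while $L_m=t(v,\bar c)$ and $R_m=t(v,\bar d)$, so it suffices to prove $L_k=R_k$ for all $k$ by induction. Writing $L_{k-1}=\alpha_k\,u\,\beta_k$ and $R_{k-1}=\alpha_k'\,u\,\beta_k'$ with $\alpha_k=w_0\,v\cdots v\,w_{k-1}$ and $\beta_k=w_k\,u\cdots u\,w_m$, the inductive step $L_{k-1}=R_{k-1}\Rightarrow L_k=R_k$ is precisely the instance of $aub=cud\Rightarrow avb=cvd$ with $a=\alpha_k$, $b=\beta_k$, $c=\alpha_k'$, $d=\beta_k'$.

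The only delicate point, and the step I expect to require the most care, is that these four factors must be genuine elements of $A$, since the semigroup need not have an identity and empty products are not allowed. Here I would note that for $2\le k\le m$ the prefix $\alpha_k$ contains at least one $v$ and for $1\le k\le m-1$ the suffix $\beta_k$ contains at least one $u$, so both are nonempty; the prefix can be empty only at $k=1$ (when the word begins with $x$) and the suffix only at $k=m$ (when it ends with $x$), and this emptiness is structural, occurring simultaneously for the blocks $w_i$ and $w_i'$. In exactly those two boundary steps I would invoke the first and second stationary conditions in place of the interior one. This telescoping transfer of the equality through single-occurrence replacements is what upgrades the three basic instances to the full Abelian property.
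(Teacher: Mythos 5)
Your proof is correct. Note that the paper offers no argument at all here --- it simply asserts that the proposition ``we get directly from the definition of Abelian algebra'' --- so there is no proof to compare against; what you have written is the standard elaboration that the authors leave implicit. The necessity direction is indeed just the three substitutions $t(x,y)=xy$, $t(x,y)=yx$, $t(x,y_1,y_2)=y_1xy_2$. For sufficiency, your telescoping replacement of the occurrences of $u$ by $v$ one at a time, transferring the equality $L_{k-1}=R_{k-1}$ to $L_k=R_k$ via the appropriate one-occurrence condition, is exactly the right mechanism, and you correctly identified the only delicate point: since the semigroup need not have an identity, the prefix $\alpha_k$ and suffix $\beta_k$ can be empty only at the boundary steps $k=1$ and $k=m$, where the two halves of stationarity take over from the interior condition $aub=cud\Rightarrow avb=cvd$. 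Your observation that this emptiness is structural (it occurs simultaneously for the $w_i$ and the $w_i'$, since the shape of the word is determined by $t$ and not by the parameter values) is what makes the case split legitimate. No gaps.
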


In \cite{war1} there is the characterization of the Abelian
semigroups. In case when semigroup $\langle A,\cdot\rangle$
satisfies the condition
$$\forall b, c \, \in A \; \; (bcA=bA \;\;\text {and} \;\; Abc=Ac)
\;\;\text {or set} \ A\cdot A \;\;\text {is finity} \eqno (*)
$$
it is possible to give a more evident description of a structure
of Abelian semigroups.

The following definitions can be found in \cite {kp}. The
semigroup $\langle A,\cdot\rangle$ is called a rectangular band of
semigroups, if there is a set $ \{A _ {i\lambda} \mid i \in I,
\lambda \in \Lambda \}$, which is a partition of set $A $, and $
\langle A _ {i\lambda}, \cdot \rangle $ are subsemigroups of
semigroup $\langle A,\cdot\rangle$ and for all $i \in I, \lambda,
\mu \in \Lambda $ inclusion $A _ {i\lambda} \cdot A _ {j\mu}
\subseteq A _ {i\mu}$ holds. The semigroup $\langle
A,\cdot\rangle$ is called an inflation of semigroup $ \langle B,
\cdot \rangle $ if there is a partition $ \{X _ {a} \mid a\in B
\}$ of set $A $ such that $a\in X _ {a}$ and $x\cdot y = a\cdot b
$ for all $a, b \in B, x \in X _ {a}, y\in X _ {b}$.

Let us define the equivalence relation on set $A $:

$$ a\alpha b \Leftrightarrow \forall x\in A (ax=bx \;\;\text {and} \;\; xa=xb) $$ for
all $a, b\in A $.

\begin {remark} \label{Has inflat - rel of equival} A semigroup
$\langle A,\cdot\rangle$ is an inflation of a semigroup $ \langle
B, \cdot \rangle $ iff there exists $b\in B $ such that $a\alpha b
$ for all $a\in A $.
\end {remark}

\begin {lemma} \label{if a inflated then aa is rectangl ab gr}
If a semigroup $\langle A,\cdot\rangle$ is an inflation of a
rectangular band $ \langle T, \cdot \rangle $ of Abelian groups
then $T=A\cdot A $.
\end {lemma}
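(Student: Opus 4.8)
The plan is to prove the two inclusions $A\cdot A\subseteq T$ and $T\subseteq A\cdot A$ separately, using only the definition of inflation and the group structure guaranteed by the rectangular band.

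First I would establish $A\cdot A\subseteq T$. Since $\langle A,\cdot\rangle$ is an inflation of $\langle T,\cdot\rangle$, there is a partition $\{X_t\mid t\in T\}$ of $A$ with $t\in X_t$ and $x\cdot y=s\cdot t$ whenever $x\in X_s$ and $y\in X_t$. Hence for arbitrary $x,y\in A$, choosing $s,t\in T$ with $x\in X_s$, $y\in X_t$, we get $x\cdot y=s\cdot t$, and this product lies in $T$ because $T$ is closed under $\cdot$ (being a rectangular band of semigroups). This gives $A\cdot A\subseteq T$. Applying the same computation to $s,t\in T$ themselves (so $s\in X_s$, $t\in X_t$) shows that the product of two elements of $T$ taken in $A$ coincides with their product in $T$; thus $T$ is a subsemigroup of $A$ and $T\cdot T\subseteq A\cdot A$.

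For the reverse inclusion $T\subseteq A\cdot A$ I would use the group structure of the components. Write $T$ as the given rectangular band $\{T_{i\lambda}\mid i\in I,\ \lambda\in\Lambda\}$ of Abelian groups, so each $T_{i\lambda}$ is an Abelian group and $T_{i\lambda}\cdot T_{j\mu}\subseteq T_{i\mu}$. Fix $t\in T$ and let $T_{i\lambda}$ be the component containing it, with identity element $e$. Taking $j=i$ and $\mu=\lambda$ in the band inclusion gives $T_{i\lambda}\cdot T_{i\lambda}\subseteq T_{i\lambda}$, so the product $e\cdot t$ is the group product inside $T_{i\lambda}$ and equals $t$. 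Since $e,t\in T\subseteq A$, this exhibits $t=e\cdot t$ as an element of $A\cdot A$. Combining the inclusions yields $T\subseteq T\cdot T\subseteq A\cdot A\subseteq T$, whence $T=A\cdot A$.

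I do not expect a serious obstacle; the only point requiring care is verifying that the multiplication inherited by $T$ from the inflation agrees with its own multiplication, so that the identity $e$ of a group component may legitimately be used to recover $t$ as a product of two elements of $A$. Once that compatibility is recorded, both inclusions are immediate.
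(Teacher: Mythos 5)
Your proposal is correct and follows essentially the same route as the paper: the inclusion $A\cdot A\subseteq T$ via the inflation partition, and $T\subseteq A\cdot A$ by writing $t=e\cdot t$ with $e$ the identity of the group component containing $t$ (a step the paper dismisses as "clear"). Your extra remark that the inflation multiplication restricted to $T$ agrees with $T$'s own multiplication is a reasonable point of care but does not change the argument.
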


\begin {proof} Let $\langle A,\cdot\rangle$ be an inflation of a rectangular band $T$
of Abelian groups. It is clear that $T\subseteq A\cdot A $. If $a
_ {1} \cdot a _ {2} \in A\cdot A $, then by definition of an
inflation of semigroup there exist $x, y\in T $ such that $x\alpha
a _ {1}$ and $y\alpha a _ {2}$, that is $a _ {1} \cdot a _ {2}
=x\cdot y \in T $ and $A\cdot A \subseteq T $.
\end {proof}

Let $\langle A,\cdot\rangle$ be a semigroup. We will introduce the
following relations on a set $A $:
 $$ x \Phi y \Leftrightarrow \exists
z (xz=yz), $$
 $$ x \Psi y\Leftrightarrow \exists z (zx=zy), $$
 $$ x X y \Leftrightarrow \exists z (zx=x
\wedge zy=y \wedge z ^ {2} =z), $$
 $$ x Y y \Leftrightarrow \exists z
(xz=x \wedge yz=y \wedge z ^ {2} =z), $$ $$ x Z y \Leftrightarrow
x X y \wedge x Y y. $$

\begin {remark} If a semigroup $\langle A,\cdot\rangle$ is Abelian
then
$$ x \Phi y \Leftrightarrow \forall
z (xz=yz), $$
$$ x \Psi y\Leftrightarrow \forall z (zx=zy) $$
and relations $ \Phi $ and $ \Psi $ are equivalence relations on a
set $A $.
\end {remark}

If $ \Theta $ is the equivalence relation on $A $ and $a\in A\cdot
A $ then a set $ (a/\Theta) \bigcap (A\cdot A) $ denote by $
\Theta _ {a}$, where $a/\Theta $ is the equivalence  class.

\begin {lemma} \label{the Insert of an idempotent} Let $\langle A,\cdot\rangle$ be Abelian semigroup.
For any idempotent $f \in A $ and all $x, y\in A $ we have $xy=xfy
$.
\end {lemma}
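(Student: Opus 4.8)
The plan is to reduce everything to the stationary condition, which an Abelian semigroup satisfies by Proposition~\ref{Razdu - otn equiv}. Recall that stationarity provides, in particular, the left implication $ub=uc \Rightarrow vb=vc$ for all $u,v,b,c\in A$. So it suffices to produce a single equality of the form $fb=fc$ (with $b,c$ built from $f$ and $y$), and then let the stationary implication propagate the replacement of the common left factor $f$ by an arbitrary element.

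First I would use the idempotency $f^2=f$ together with associativity to observe that for every $y\in A$
$$ f\cdot(fy)=(f\cdot f)\cdot y=f\cdot y. $$
This is an instance of $ub=uc$ with $u=f$, $b=fy$ and $c=y$, since both sides equal $fy$.

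Applying the left half of the stationary condition to this equality, I would conclude $v\cdot(fy)=v\cdot y$ for every $v\in A$; that is, $vfy=vy$ for all $v,y\in A$. Specialising $v=x$ then gives $xfy=xy$, which is exactly the assertion of the lemma.

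I do not expect a genuine obstacle here: the entire content is that stationarity lets one ``cancel'' the factor $f$ produced on the left by idempotency. The only point requiring (routine) care is the bookkeeping of associativity brackets, so that $f(fy)$ and $fy$ are genuinely matched as $ub$ and $uc$ with the common left factor $u=f$. A symmetric right-handed argument, using $(xf)\cdot f=xf$ and the right half of the stationary condition, is available as an alternative, but the left version already suffices.
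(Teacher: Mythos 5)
Your proof is correct and takes essentially the same one\-step approach as the paper: the paper starts from $xf=xff$ and applies the term condition (right half of stationarity) to replace the trailing $f$ by $y$, while you start from $f(fy)=fy$ and apply the left half to replace the leading $f$ by $x$ --- a mirror image of the same argument, both legitimately resting on Proposition~\ref{Razdu - otn equiv}.
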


\begin {proof}
Let $x, y \in A $ and $f\in A $ be an idempotent. Then $xf=xff $.
As semigroup $\langle A,\cdot\rangle$ is Abelian we have $xy=xfy
$.
\end {proof}

\begin {lemma} \label{Relations X, Y} Let $\langle A,\cdot\rangle$ be Abelian semigroup satisfying condition $ (*) $. Then

1) the relations $X $ and $Y $ are the equivalence relations on $A
\cdot A $;

2) for all idempotents $e, f\in A $ the equality $ \Phi _ {e}
\bigcap \Psi _ {f} = \{ef \}$ holds and $ef $ is an idempotent;

3) for any $a\in A\cdot A $ there exist the idempotents $e, f \in
A $ such that $a \in X _ {e} \bigcap Y _ {f}$ and $X _ {e} \bigcap
Y _ {f} =Z _ {ef}$.
\end {lemma}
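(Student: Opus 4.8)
The plan is to reduce all three parts to a single structural fact, which I will call $(\mathrm K)$:

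\emph{every $a\in A\cdot A$ possesses an idempotent left identity $e\in A$ (so $ea=a$, $e^{2}=e$) and an idempotent right identity $f\in A$ (so $af=a$, $f^{2}=f$).}

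Granting $(\mathrm K)$, the three parts fall out as follows. Reflexivity of $X$ and $Y$ on $A\cdot A$ is exactly $(\mathrm K)$ (note that an idempotent $e$ lies in $A\cdot A$ since $e=ee$, so the classes $X_e,Y_f$ make sense), and symmetry is immediate. For transitivity of $X$, suppose $ea=a,\ eb=b$ and $fb=b,\ fc=c$ with $e,f$ idempotent. Then $eb=fb$, and since $\langle A,\cdot\rangle$ is stationary (Proposition~\ref{Razdu - otn equiv}) the equality $eb=fb$ with common right factor $b$ forces $ev=fv$ for all $v$; taking $v=c$ gives $ec=fc=c$, so $e$ is a common idempotent left identity of $a$ and $c$, i.e.\ $a\,X\,c$. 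The dual stationarity clause gives transitivity of $Y$. This is the one genuinely slick point in Part~1, and it uses only stationarity.

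For Part~2, I first check $ef$ is idempotent: by Lemma~\ref{the Insert of an idempotent} (insert the idempotent $e$) one has $ff=fef$, hence $fef=f$, so $efef=e(fef)=ef$. Next, inserting $f$ gives $ez=efz$ for all $z$, so $ef\,\Phi\,e$ and $ef\in\Phi_e$; inserting $e$ gives $zf=zef$, so $ef\,\Psi\,f$ and $ef\in\Psi_f$. For uniqueness, let $a\in\Phi_e\cap\Psi_f$, so $az=ez$ and $za=zf$ for all $z$, and $a\in A\cdot A$. By $(\mathrm K)$ choose a right identity $w$ of $a$ ($aw=a$); then $a=aw=ew$ (using $a\,\Phi\,e$ at $z=w$), whence $ea=e(ew)=ew=a$, while $ea=ef$ (using $a\,\Psi\,f$ at $z=e$); therefore $a=ef$. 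For Part~3, given $a\in A\cdot A$ take the idempotents $e,f$ from $(\mathrm K)$: then $a\,X\,e$ (witnessed by $z=e$) and $a\,Y\,f$ (witnessed by $z=f$), so $a\in X_e\cap Y_f$. The identity $X_e\cap Y_f=Z_{ef}$ is then formal: for any idempotents $e,f$ one has $ef\,X\,e$ (witness $z=e$) and $ef\,Y\,f$ (witness $z=f$), so $X_{ef}=X_e$ and $Y_{ef}=Y_f$; since $Z=X\cap Y$, we get $Z_{ef}=(ef/X)\cap(ef/Y)\cap(A\cdot A)=X_{ef}\cap Y_{ef}=X_e\cap Y_f$.

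It remains to establish $(\mathrm K)$, which is where condition $(*)$ enters and which I expect to be the main obstacle. I would split on the two disjuncts of $(*)$. If $A\cdot A$ is finite, write $a=bc$ and let $b^{s},c^{t}$ be the idempotent powers of $b,c$. Taking $f=c^{t}$, I have $ac^{t}=bc^{t+1}$, and I would promote the witnessed equality $c^{t-1}c^{t+1}=c^{2t}=c^{t}=c^{t-1}c$ to $c^{t+1}\,\Psi\,c$ by stationarity, giving $bc^{t+1}=bc=a$; hence $ac^{t}=a$. Symmetrically $b^{s}a=b^{s+1}c$, and $b^{s+1}b^{s-1}=b^{2s}=b^{s}=bb^{s-1}$ promotes to $b^{s+1}\,\Phi\,b$, so $b^{s+1}c=bc=a$ and $b^{s}a=a$ (the degenerate cases $s=1$ or $t=1$ being trivial). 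If instead the ideal condition of $(*)$ holds, idempotent powers need not exist, so I would argue via regularity: $(*)$ gives $aA=bA$ and $a\in bA$, hence $a\in aA$, and applying $bcA=bA$ with both factors equal to $a$ gives $a^{2}A=aA\ni a$, so $a\in a^{2}A$; dually $a\in Aa^{2}$. Thus $a\,\mathcal H\,a^{2}$, so $a$ lies in a subgroup of $A$ (a standard fact, cf.\ \cite{kp}), whose identity is a two-sided idempotent identity for $a$. The hard part is precisely this passage from the existential, "stationary'' information (which only ever yields $\Phi$- and $\Psi$-relations, never equalities of elements) to honest idempotent identities; the finite case resolves it by counting inside the cyclic subsemigroups of the factors $b,c$, and the infinite case by the regularity argument, and checking that these two routes genuinely cover all instances of $(*)$ is the delicate bookkeeping I would have to carry out in full.
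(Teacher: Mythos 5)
Your proof is correct, and its skeleton --- reduce everything to the existence of idempotent one-sided identities for elements of $A\cdot A$ (your fact $(\mathrm K)$, which is exactly the reflexivity of $X$ and $Y$), then let stationarity do the rest --- is the same as the paper's; the transitivity argument via $eb=fb\Rightarrow ev=fv$ is identical. The differences are in the sub-arguments. For $(\mathrm K)$ the paper works only from the first disjunct of $(*)$: from $Aa=Abc=Ac\ni a$ it gets $a=da$ and manufactures the idempotent elementarily as $e=d^{2}$ (from $d^{3}a=da$ one gets $d^{3}\,\Phi\,d$, hence $d^{4}=d^{2}$), with no appeal to Green's theory; it does not explicitly treat the case where only $|A\cdot A|<\infty$ holds, which you do handle, via idempotent powers of the factors $b,c$ promoted by stationarity --- so your treatment of Part 1 is actually more complete, at the price of importing Green's theorem ($a\,\mathcal H\,a^{2}$ implies $H_a$ is a group) in the ideal case where the paper's $e=d^{2}$ construction is more elementary. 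In Part 2 the paper shows that \emph{every} $g=bc\in\Phi_e\cap\Psi_f$ is idempotent (from $ge=ee$ it extracts $(bcb)\,\Phi\,b$, hence $g^{2}=g$) and then notes that two idempotents in that class coincide, whereas you pin down $a=ef$ directly through a right identity $w$ of $a$; both are valid, yours is shorter but leans on $(\mathrm K)$ again. In Part 3 the paper derives $a\in Z_{ef}$ from Lemma \ref{the Insert of an idempotent} and only spells out one inclusion of $X_e\cap Y_f=Z_{ef}$, while your purely formal observation that $ef\,X\,e$ and $ef\,Y\,f$, hence $X_{ef}=X_e$ and $Y_{ef}=Y_f$ and so $Z_{ef}=X_e\cap Y_f$, gives both inclusions at once without the insertion lemma. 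All of your steps check out.
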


\begin {proof} Let us prove \emph {1)}. We claim that the relation $X $ is the equivalence relation
on $A \cdot A $. Show that $X$ is the reflexive relation. Let
$a=bc $ be any element of $A \cdot A $. On condition $ (*) $
$Ac=Aa $. From $a \in Ac $ it follows that $a \in Aa $, i.e. $a=da
$ for some $d \in A $. Let $e=dd $. Then $ea=dda=da=a $. As
$ddda=da $ then $ (ddd) \Phi d $. Hence $e ^ {2} =dddd=dd=e $.
Therefore $aXa $ for any $a \in A \cdot A $.

 Show that $X$ is the transitive relation. Suppose $aXb $ and $bXc $, where
 $a, b, c\in A \cdot A $, that is $ea=a, eb=b, fb=b, fc=c $ for
 some idempotents $e, f\in A $. Since the semigroup $ \langle A, \cdot
 \rangle $ is Abelian then the equality $eb=fb $  implies $ea=fa=a $. Hence $aXc $. Thus
 the relation $X $ is the equivalence relation on $A \cdot A $.
 It is similarly proved that the relation $Y $ is also
 the relation of the equivalence relation on $A \cdot A $.

Let us prove \emph {2)}. Let $e $, $f $ be idempotents. As $eff=ef
$ and $eef=ef $ then $ (ef) \Phi e $ and $ (ef) \Psi f $, i.e. $ef
\in \Phi _ {e} \bigcap \Psi _ {f}$. Let $g \in \Phi _ {e} \bigcap
\Psi _ {f}$, $g=bc $, $b, c \in A $. Since $ge=ee $ then $bcb (ce)
=b (ce) $, that is $ (bcb) \Phi b $. Hence $bcbc=bc $, i.e. $g ^
{2} =g $, in particular $ef $ is an idempotent. If $g, g ' \in
\Phi _ {e} \bigcap \Psi _ {f}$ then, in view of $g, g ' $ are the
idempotents, we have $g=gg=gg '=g'g' =g ' $. Thus, $ \Phi _ {e}
\bigcap \Psi _ {f} = \{ef \}$.

Let us prove \emph {3)}. Since $X$ is the reflexive relation then
for $a\in A \cdot A $ there is an idempotent $e\in A $ such that
$aXe $. Similarly there is an idempotent $f $ such that $aYf $.
Hence $a\in X_e\cap Y_f $. By Lemma \ref{the Insert of an
idempotent}  $a=ea=(ef)a $ and $a=af=a (ef) $. As $ef $ is an
idempotent then $a\in Z _ {ef}$. The equality $X _ {e} \bigcap Y _
{f} =Z _ {ef}$ is proved.
\end {proof}

\begin {lemma} \label{If a ab that aa rest band of ab gr}
If $\langle A,\cdot\rangle$ is an Abelian semigroup satisfying
condition $ (*) $ then $ \langle A\cdot A, \cdot \rangle $ is a
rectangular band of Abelian groups $ \{\langle Z _ {ef}; \cdot
\rangle \mid e, f  \;\mbox {are idempotents} \}$, and $Z _ {ef '}
=Z _ {ef} \cdot Z _ {e'f '}$ for all idempotents $e, f, e ', f
'\in A $.
\end {lemma}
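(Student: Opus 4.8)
The plan is to realise $A\cdot A$ as a disjoint union of the sets $Z_{ef}$, to show that each $\langle Z_{ef},\cdot\rangle$ is an Abelian group with identity $ef$, and finally to verify the multiplication rule $Z_{ef}\cdot Z_{e'f'}=Z_{ef'}$, whose ``$\subseteq$'' part is precisely the defining inclusion of a rectangular band. First I would record a preliminary fact used everywhere: if $e$ is an idempotent and $aXe$, then $ea=a$ (and dually $aYf$ with $f$ idempotent gives $af=a$). Indeed, choosing the idempotent $w$ witnessing $aXe$, so $wa=a$ and $we=e$, and inserting $e$ by Lemma~\ref{the Insert of an idempotent}, we get $a=wa=wea=(we)a=ea$. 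In particular, for $a\in Z_{ef}=X_e\cap Y_f$ and $g:=ef$ (idempotent by Lemma~\ref{Relations X, Y}(2)) one has $ea=a$ and $af=a$, and inserting idempotents once more, $ga=efa=ea=a$ and $ag=aef=af=a$. Thus $g$ is a two-sided identity on $Z_{ef}$, $g\in Z_{ef}$, and $Z_{ef}=Z_g$.

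For the partition, coverage of $A\cdot A$ is Lemma~\ref{Relations X, Y}(3), and disjointness follows because if $a$ lies in both $Z_{ef}$ and $Z_{e'f'}$ then $X_e=X_{e'}$ and $Y_f=Y_{f'}$ (these are the $X$- and $Y$-classes of $a$), so the two blocks coincide. Closure is immediate: for $a,b\in Z_{ef}$ we have $g(ab)=(ga)b=ab=a(bg)=(ab)g$ and $ab\in A\cdot A$, so $ab\in Z_g=Z_{ef}$; hence $\langle Z_{ef},\cdot\rangle$ is a subsemigroup, thus a subalgebra of $\langle A,\cdot\rangle$. It is cancellative, since $ab=ac$ with $a,b,c\in Z_{ef}$ forces $gb=gc$, i.e.\ $b=c$, by stationarity (Proposition~\ref{Razdu - otn equiv}), and symmetrically on the right.

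To upgrade the cancellative monoid $Z_{ef}$ to a group I would invoke condition~$(*)$. If $A\cdot A$ is finite this is automatic, a finite cancellative semigroup being a group. Otherwise $bcA=bA$ for all $b,c$; taking $b=g$, $c=a$ gives $gaA=gA$, and since $ga=a$ and $g=gg\in gA$ we obtain $g\in aA$, so $ax=g$ for some $x\in A$; then $x':=gxg$ lies in $Z_{ef}$ and satisfies $ax'=g$, so every element has a right inverse and $Z_{ef}$ is a group. Commutativity then comes cheaply: being a subalgebra of the Abelian algebra $\langle A,\cdot\rangle$, the group $\langle Z_{ef},\cdot\rangle$ is itself Abelian, and an Abelian group is commutative by the fact recalled in the Introduction. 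Hence each $Z_{ef}$ is an Abelian group.

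It remains to establish $Z_{ef}\cdot Z_{e'f'}=Z_{ef'}$. The inclusion $\subseteq$ is formal: for $a\in Z_{ef}$, $b\in Z_{e'f'}$ the preliminary fact gives $e(ab)=(ea)b=ab$ and $(ab)f'=a(bf')=ab$, so $ab\in X_e\cap Y_{f'}=Z_{ef'}$. The reverse inclusion is the only genuinely non-formal point, and I would prove it by exhibiting an explicit preimage: given $c\in Z_{ef'}$, set $b:=e'cf'$, which lies in $Z_{e'f'}$ (witnessed by $e'$ and $f'$); then using $cf'=c$ together with the insertion identities $ec=efc=efe'c$ one computes $gb=ef\,e'cf'=efe'c=ec=c$, so $c=g\cdot b\in Z_{ef}\cdot Z_{e'f'}$. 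The steps needing real care, rather than bookkeeping, are the passage from a cancellative monoid to a group, which is exactly where condition~$(*)$ is consumed, and this surjectivity $Z_{ef'}\subseteq Z_{ef}\cdot Z_{e'f'}$, for which the element $e'cf'$ is the key device; commutativity, by contrast, is obtained for free from subalgebra inheritance of the Abelian property.
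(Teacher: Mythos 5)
Your proof is correct and follows the same skeleton as the paper's: partition $A\cdot A$ into the classes $Z_{ef}$ via Lemma~\ref{Relations X, Y}, show $ef$ is a two-sided identity of $Z_{ef}$ by idempotent insertion (Lemma~\ref{the Insert of an idempotent}), use condition $(*)$ to get invertibility, and verify the rectangular-band inclusion by the same insertion trick. Two differences are worth noting, both in your favour. First, the paper establishes the group property only under the first disjunct of $(*)$ (it writes ``On condition $(*)$ $Ag=Abg$'', then solves $yb=a$ and appeals to Theorem~\ref{ab gr s 1}), silently skipping the branch where only $A\cdot A$ is finite; you treat that branch separately via cancellativity of a finite monoid, and in the other branch you build explicit right inverses $gxg\in Z_g$ rather than routing through Theorem~\ref{ab gr s 1} — either route is fine, since both ultimately lean on $Z_g$ being an Abelian subalgebra with identity. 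Second, the paper proves only the inclusion $Z_{ef}\cdot Z_{e'f'}\subseteq Z_{ef'}$ and stops, even though the lemma asserts equality; your construction of the preimage $c=(ef)(e'cf')$ supplies the reverse inclusion the paper omits, so your argument actually proves the full statement as written. Your preliminary fact ($aXe$ implies $ea=a$, via the witness $w$ and insertion of $e$) is used implicitly but never isolated in the paper, and making it explicit is what lets the rest of your bookkeeping go through cleanly.
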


\begin {proof}
The relation $Z $ is the equivalence relation on $A\cdot A $ as
intersection of two equivalence relations.

Let $a\in A\cdot A $. By Lemma \ref{Relations X, Y} there exists
an idempotent $g $ such that $a\in Z_g $. Hence the semigroup $
\langle A\cdot A, \cdot \rangle $ is an union of semigroups $
\langle Z _ {g}, \cdot \rangle $, where $g $ is an idempotent. Let
$g\in A $ be any idempotent. We will show that $ \langle Z_g,
\cdot \rangle $ is an Abelian group. It is clear that $ \langle
Z_g, \cdot \rangle $ is a semigroup with the identity $g $. Let us
show that the equation $ax=b $ has the solution for all $a, b\in Z
_ {g}$. On condition $ (*) $ $Ag=Abg $. Then $g=xbg $ for some
$x\in A $. So $ag=axbg $ and $a = (ax) b $. Therefore by Theorem
\ref{ab gr s 1} $\langle Z_{g},\cdot \rangle$ is an Abelian group.

Suppose $e, f, e ', f ' $ are idempotents. By Lemma \ref{Relations
X, Y} $fe ' $ is an idempotent, and by Lemma \ref{the Insert of an
idempotent} we have $efe'f '=ef' $. We claim that $Z _ {ef} \cdot
Z _ {e'f '} \subseteq Z _ {ef '}$. If $a\in Z _ {ef}$, $b\in Z _
{e'f '}$ then by Lemmas \ref{the Insert of an idempotent} and
\ref{Relations X, Y} $ab=abe'f'=abee'f'=abef'$ and
$ab=efab=eff'ab=ef'ab$, that is $ab\in Z _ {ef '}$ and $Z _ {ef}
\cdot Z _ {e'f '} \subseteq Z _ {ef '}$.  Thus the semigroup $
\langle A\cdot A, \cdot \rangle $ is a rectangular band of Abelian
groups $ \{\langle Z _ {ef}; \cdot \rangle \mid e, f  \;\mbox {are
idempotents} \}$.
\end {proof}

\begin {theorem} \label{a - ab iff a - inflated of rec band ab gr} Let $\langle A,\cdot\rangle$
be a semigroup satisfying condition $ (*) $. Then $\langle
A,\cdot\rangle$ is an Abelian algebra iff  $\langle
A,\cdot\rangle$ is an inflation of a rectangular band of Abelian
groups and the product of idempotents of $A $ is idempotent of $A
$.
\end {theorem}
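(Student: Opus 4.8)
The plan is to prove the two implications separately, leaning on the structure theory already assembled. For the forward direction I assume $\langle A,\cdot\rangle$ is Abelian and satisfies $(*)$. Lemma \ref{If a ab that aa rest band of ab gr} already gives that $\langle A\cdot A,\cdot\rangle$ is a rectangular band of Abelian groups, and Lemma \ref{Relations X, Y}(2) already gives that the product of any two idempotents of $A$ is again an idempotent (hence the idempotents are closed under products). So the only remaining task is to show that $A$ is an \emph{inflation} of $\langle A\cdot A,\cdot\rangle$. By Remark \ref{Has inflat - rel of equival} it suffices to produce, for each $a\in A$, an element $b\in A\cdot A$ with $a\,\alpha\,b$, i.e. with $az=bz$ and $za=zb$ for all $z$.

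To build such a $b$ I would start from $a^{2}=a\cdot a\in A\cdot A$. By reflexivity of $X$ and $Y$ (Lemma \ref{Relations X, Y}(1)) there are idempotents $e,f$ with $ea^{2}=a^{2}$ and $a^{2}f=a^{2}$. Reading $ea^{2}=a^{2}$ as $(ea)a=aa$ gives $ea\,\Phi\,a$, and since in an Abelian semigroup $x\,\Phi\,y$ means $xz=yz$ for \emph{all} $z$, this yields $eaz=az$ for every $z$; dually $a^{2}f=a^{2}$ gives $af\,\Psi\,a$, hence $zaf=za$ for every $z$. Now I set $b=eaf\in A\cdot A$. Then $bz=ea(fz)=a(fz)=afz=az$, where the middle equality uses $eaw=aw$ and the last uses Lemma \ref{the Insert of an idempotent} (inserting the idempotent $f$); symmetrically $zb=(ze)(af)=(ze)a=zea=za$ using $w(af)=wa$ and Lemma \ref{the Insert of an idempotent} for $e$. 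Thus $a\,\alpha\,b$, which finishes the forward direction.

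For the converse I assume $A$ is an inflation of a rectangular band $T$ of Abelian groups and that the idempotents of $A$ are closed under multiplication. By Lemma \ref{if a inflated then aa is rectangl ab gr}, $T=A\cdot A$, and the map $\pi\colon a\mapsto\bar a$ (the unique $t\in T$ with $a\in X_{t}$) is a surjective homomorphism fixing $T$ pointwise. Since every product of two or more elements lies in $T$ and is computed through $\pi$, the equations in Proposition \ref{Razdu - otn equiv} (stationarity and the middle-exchange $aub=cud\Rightarrow avb=cvd$) hold in $A$ if and only if the corresponding equations hold in $T$; so by Proposition \ref{Razdu - otn equiv} it is enough to verify that $T$ is stationary and middle-exchanging.

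This is where I expect the real work, and where the idempotent hypothesis is essential. Writing $T=\bigcup G_{i\lambda}$ with group identities $\epsilon_{i\lambda}$, the closure hypothesis forces $\epsilon_{i\lambda}\epsilon_{j\mu}=\epsilon_{i\mu}$, because that product lies in $G_{i\mu}$ and is idempotent, so the idempotents form a rectangular band. This single fact turns the left and right translations $g\mapsto\epsilon_{i\nu}g$ and $g\mapsto g\epsilon_{i\nu}$ between groups in a common row or column into group isomorphisms (the check $\rho(g)\rho(h)=g\epsilon_{i\nu}h\epsilon_{i\nu}=gh\epsilon_{i\nu}=\rho(gh)$ uses exactly $\epsilon_{i\nu}h=h$). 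Translating both factors into the group $G_{i\mu}$ fixed by the row of the left factor and the column of the right factor gives $\sigma\beta=\sigma^{(\mu)}\cdot{}^{(i)}\beta$ inside the Abelian group $G_{i\mu}$, and iterating gives $\sigma\upsilon\tau=\hat\sigma\,\hat\upsilon\,\hat\tau$ inside one Abelian group $G_{af}$, where $\hat x$ denotes the translate of $x$ into $G_{af}$. Stationarity and middle-exchange for $T$ then collapse to cancellation and commutativity in a single Abelian group: from $\sigma\upsilon\tau=\beta\upsilon\gamma$ (which already forces $\sigma,\beta$ into one row and $\tau,\gamma$ into one column) I cancel $\hat\upsilon$ to get $\hat\sigma\hat\tau=\hat\beta\hat\gamma$, whence $\sigma\omega\tau=\hat\omega\hat\sigma\hat\tau=\hat\omega\hat\beta\hat\gamma=\beta\omega\gamma$ for every $\omega$; stationarity is the analogous, easier one-sided cancellation. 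The main obstacle is organizing these translations so that the products genuinely collapse into a single Abelian group, i.e. verifying that the translation maps are homomorphisms and compose coherently, since that is the only point where the associativity of $T$ and the idempotent-closure hypothesis must interact.
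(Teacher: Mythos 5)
Your proof is correct and follows essentially the same route as the paper's: necessity combines Lemma \ref{Relations X, Y} and Lemma \ref{If a ab that aa rest band of ab gr} with an explicit $\alpha$-representative of each $a\in A$ inside $A\cdot A$ (you take $b=eaf$ built from the reflexivity of $X$ and $Y$ together with the $\Phi$/$\Psi$ characterization, while the paper takes $gtg$ with $g$ the identity of the group containing $(te)(ft)$ --- both work), and sufficiency reduces the stationarity and middle-exchange conditions of Proposition \ref{Razdu - otn equiv} to cancellation and commutativity inside a single Abelian group via the forced identities $e_{i\lambda}e_{j\mu}=e_{i\mu}$, exactly as the paper does. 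The only caveat is that your final ``collapse into one group'' is left as an acknowledged sketch, but it is precisely the chain of equalities the paper writes out explicitly, so nothing essential is missing.
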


\begin {proof}
 Let $\langle A,\cdot\rangle$ be Abelian
semigroup satisfying condition $ (*) $. By Lemma \ref{Relations X,
Y} the product of idempotents of $A $ is an idempotent of $A $. By
Lemma \ref{If a ab that aa rest band of ab gr} semigroup $ \langle
A\cdot A, \cdot \rangle $ is a rectangular band of Abelian groups
$ \{\langle Z _ {ef}; \cdot \rangle \mid e, f \;\mbox {are
idempotents} \}$ and $Z _ {ef '}=Z _ {ef} \cdot Z _ {e'f '}$ for
any idempotents $e, f, e ', f '\in A $.

Let us show that $\langle A,\cdot\rangle$ is an inflation of
rectangular band of Abelian groups. We will fix any element $t\in
A $ not belonging to the set $A\cdot A $. By Remark \ref{Has
inflat - rel of equival} it is enough to find an idempotent $g $
such that $gtg\in t/\alpha $. Let $e $ and $f $ be any
idempotents, $ (te) (ft) \in Z_g $, where $g $ is an idempotent,
and so an identity of group $ \langle Z _ {g}; \cdot \rangle $. By
Lemma \ref{the Insert of an idempotent} $ (te) (ft) = (te ') (f't)
$ for all idempotents $e ', f ' $, therefore $g $ does not depend
on a choice of idempotents $e $ and $f $.

Let us show that $ (gtg) \alpha t $. Since $ (te) (ft) \in Z_g $
then $gt (eft) =t (eft) $. As the semigroup $\langle
A,\cdot\rangle$ is Abelian then $gtg=tg $. Similarly, $gtg=gt $.
So by  Lemma \ref{the Insert of an idempotent} for all $x\in
A\cdot A $  we receive $xt=xgt=xgtg $ and $tx=tgx=gtgx $, i.e. $
(gtg) \alpha t $ and $ \langle A; \cdot \rangle $ is an inflation
of semigroup $ \langle A\cdot A; \cdot \rangle $.

Let us prove the sufficiency. By Lemma \ref{if a inflated then aa
is rectangl ab gr} $\langle A,\cdot\rangle$ is an inflation of
semigroup $ \langle A\cdot A; \cdot \rangle $, being, on the
condition, a rectangular band of Abelian groups $Z _ {i\lambda}$
with identities $e _ {i\lambda}$, where $e _ {i\lambda} \cdot e _
{j\mu} =e _ {i\mu}$ for all $i, j\in I $ and $ \lambda, \mu \in
\Lambda $. We claim that $\langle A;\cdot\rangle$ is an Abelian
algebra. Let $x,y,a_k,b_k\in A$ ($k\in \{1,2\}$) and $a_{1}x
b_{1}=a_{2}x b_{2}$. We will show that $a_{1}y b_{1}=a_{2}y
b_{2}$. By definition of an inflation of semigroup there are
$x'\in Z_{j\mu}$, $y'\in Z_{l\nu}$, $a'_{k}\in Z_{i\lambda _{k}}$,
$b'_{k}\in Z_{i_{k}\lambda}$ ($k\in \{1,2\}$) such that $x\alpha
x'$, $y\alpha y'$, $a_{k}\alpha a'_{k}$, $b_{k}\alpha b'_{k}$.
Hence $a'_{1}x' b'_{1}=a'_{2}x' b'_{2}$. For all $k\in \{1,2\}$
 $$a'_{k}x'b'_{k}=(a'_{k}e_{i\lambda _{k}})x'(e_{i_{k}\lambda } b'_{k})=
a'_{k}(e_{i\lambda}e_{i\lambda _{k}})x'(e_{i_{k}\lambda
}e_{i\lambda}) b'_{k}=$$
 $$=(a'_{k}e_{i\lambda})(e_{i\lambda _{k}}x'e_{i_{k}\lambda
})(e_{i\lambda} b'_{k})=(a'_{k}e_{i\lambda})(e_{i\lambda
_{k}}e_{j\mu}x'e_{j\mu}e_{i_{k}\lambda })(e_{i\lambda} b'_{k})=$$
 $$=(a'_{k}e_{i\lambda})(e_{i\mu}x'e_{j\lambda })(e_{i\lambda}
b'_{k})=a_{k}''x''b_{k}'',$$
 where $a_{k}''=a'_{k}e_{i\lambda}$,
$x''=e_{i\mu}x'e_{j\lambda }$, $b_{k}''=e_{i\lambda} b'_{k}$ и
$a_{k}'', x'', b_{k}''\in Z_{i \lambda}$. So
$a_{1}''x''b_{1}''=a_{2}''x''b_{2}''$. Similarly
$a'_{k}y'b'_{k}=a_{k}''y''b_{k}''$, where
$y''=e_{i\nu}y'e_{l\lambda}\in Z_{i\lambda}$. Since $\langle
Z_{i\lambda};\cdot \rangle$ is an Abelian group then
$a_{1}''y''b_{1}''=a_{2}''y''b_{2}''$, т.е. $a'y'b'=c'y'd'$.
Therefore $ayb=cyd$. Similarly it is proved stationary of
semigroup $\langle A;\cdot \rangle$. Hence by Theorem \ref{ab gr s
1}, полугруппа $\langle A;\cdot \rangle$ is Abelian.
\end {proof}

Note that $ (\ast) $ is used only in the proof of necessity in
Theorem \ref{if a inflated then aa is rectangl ab gr}.

The following \emph {example} shows, that in Theorem \ref{a - ab
iff a - inflated of rec band ab gr} it is impossible to omit the
condition that the product of idempotents is an idempotent.
Consider a set $A =\bigcup \{Z _ {i\lambda} \mid i\in \{0,1 \},
\lambda \in \{0,1 \} \}$, where $ \langle Z _ {i\lambda}; +
\rangle $ are the copies of the residue class group $ \langle
\mathcal {Z} _ {2}; + \rangle $ modulo 2, $ \mathcal {Z} _ {2} =
\{\overline {0}, \overline {1} \}$, $ \overline {0} _ {i\lambda}$
are the copies of an element $ \overline {0}\in Z _ {i\lambda}$, $
\overline {1} _ {i\lambda}$ are the copies of an element $
\overline {1}\in Z _ {i\lambda}$ $ (i\in \{0,1 \}, \lambda \in \{
0,1\}) $. We will extend an operation + on a set $A $ as follows:
\[ \bar{\varepsilon}_{i\lambda}+\bar{\delta}_{j\mu}= \left\{
\begin{array}{ll}
\bar{\varepsilon} _{i\mu}+\bar{\delta}_{i\mu},& \textrm{if } i=j \;\;\textrm{or}\; \lambda =\mu; \\
\bar{\varepsilon} _{i\mu}+\bar{\delta}_{i\mu}+\bar{1}_{i\mu}, &
\textrm{otherwise. }
\end{array} \right. \]

 The semigroup $ \langle A; + \rangle $ is a rectangular band of
 Abelian groups, and the sum of idempotents is not
 an idempotent. For example,
 $ \overline {0} _ {00} + \overline {0} _ {11} = \overline {1} _ {01}$. This semigroup is not
Abelian since $ \overline {0} _ {10} + \overline {1} _ {01} =
\overline {1} _ {11} + \overline {1} _ {01}$, and $ \overline {0}
_ {10} + \overline {1} _ {11} \neq \overline {1} _ {11} +
\overline {1} _ {11}$.

In \cite{war2} it is proved that the semisimple semigroup $\langle
A,\cdot \rangle$ is Abelian iff $\langle A,\cdot \rangle\cong
\langle H,\cdot \rangle\times \langle I,\cdot \rangle\times
\langle J,\cdot \rangle$, where $\langle H,\cdot \rangle$ is an
Abelian group, $\langle I,\cdot \rangle$ is a left zero semigroup,
$\langle J,\cdot \rangle$ is a right zero semigroup. It is not
difficult to prove the following Remark.

\begin{remark}\label{pryam svyazk} A semigroup $\langle A,\cdot \rangle$
is a rectangular band of the Abelian groups and the product of
idempotents of $A $ is idempotent of $A$ iff $\langle A,\cdot
\rangle\cong \langle H,\cdot \rangle\times \langle I,\cdot
\rangle\times \langle J,\cdot \rangle$, where $\langle H,\cdot
\rangle$ is an Abelian group, $\langle I,\cdot \rangle$ is a left
zero semigroup, $\langle J,\cdot \rangle$ is a right zero
semigroup.
\end{remark}

Then by Theorem \ref{a - ab iff a - inflated of rec band ab gr}
and Remark \ref{pryam svyazk} we have

\begin{corollary}
Let $\langle A,\cdot\rangle$ be a semigroup satisfying condition
$(*)$. Then $\langle A,\cdot\rangle$ is an Abelian algebra iff
$\langle A,\cdot\rangle$ is an inflation of a semisimple Abelian
semigroup.
\end{corollary}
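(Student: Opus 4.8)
The plan is to obtain this corollary purely by chaining together Theorem~\ref{a - ab iff a - inflated of rec band ab gr}, Remark~\ref{pryam svyazk}, and the characterization of semisimple Abelian semigroups quoted from \cite{war2}. No fresh semigroup-theoretic work should be required, since each ingredient is already established; the task is to splice the three equivalences together so that both directions of the ``iff'' drop out simultaneously.

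First I would invoke Theorem~\ref{a - ab iff a - inflated of rec band ab gr}: for a semigroup satisfying $(*)$, being an Abelian algebra is equivalent to being an inflation of a rectangular band of Abelian groups in which the product of any two idempotents is again an idempotent. By Lemma~\ref{if a inflated then aa is rectangl ab gr} the base of such an inflation is exactly $A\cdot A$, so the whole question reduces to identifying which semigroups arise as admissible bases. Next I would apply Remark~\ref{pryam svyazk}, which states precisely that a rectangular band of Abelian groups whose idempotents multiply to idempotents is isomorphic to a direct product $\langle H;\cdot\rangle\times\langle I;\cdot\rangle\times\langle J;\cdot\rangle$, with $H$ an Abelian group, $I$ a left zero semigroup, and $J$ a right zero semigroup.

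Finally I would cite the result of \cite{war2} recalled just above Remark~\ref{pryam svyazk}: a semisimple semigroup is Abelian iff it has exactly this $H\times I\times J$ form. Combining the last two facts shows that the admissible bases are precisely the semisimple Abelian semigroups, and so $\langle A;\cdot\rangle$ is an Abelian algebra iff it is an inflation of a semisimple Abelian semigroup. Both implications travel along the same chain of equivalences, read forwards or backwards, so the argument is symmetric.

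Since every step is an application of an already-proved statement, I do not anticipate a genuine obstacle. The only point demanding care is bookkeeping: one must verify that the class ``rectangular band of Abelian groups with idempotent products'' appearing in Theorem~\ref{a - ab iff a - inflated of rec band ab gr} is literally the same class that \cite{war2} identifies, via the $H\times I\times J$ decomposition, with the semisimple Abelian semigroups. Once Remark~\ref{pryam svyazk} bridges these two descriptions, substituting ``semisimple Abelian semigroup'' for ``rectangular band of Abelian groups with idempotent products'' in the statement of Theorem~\ref{a - ab iff a - inflated of rec band ab gr} yields the corollary immediately.
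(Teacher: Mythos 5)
Your proposal is correct and follows exactly the route the paper takes: the paper derives this corollary by combining Theorem~\ref{a - ab iff a - inflated of rec band ab gr} with Remark~\ref{pryam svyazk} and the quoted characterization of semisimple Abelian semigroups from \cite{war2}. Your extra remark that all idempotents of an inflation lie in the base $A\cdot A$ (via Lemma~\ref{if a inflated then aa is rectangl ab gr}) is the only bookkeeping the paper leaves implicit, and you handle it correctly.
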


The following Proposition give us the sufficient condition for a
Hamilton semigroup.

\begin{proposition}\label{then gam semigr} If the semigroup
$\langle A,\cdot \rangle$ is an inflation of a rectangular band of
the periodic Abelian groups and the product of idempotents of $A $
is idempotent of $A $ then $\langle A,\cdot \rangle$ is a
Hamiltonian algebra.
\end{proposition}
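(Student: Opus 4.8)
The plan is to exploit the explicit structure forced by the hypotheses. First I would record the structural setup: by the sufficiency part of Theorem~\ref{a - ab iff a - inflated of rec band ab gr} (whose proof does not use $(*)$) the semigroup $\langle A,\cdot\rangle$ is Abelian, and by Lemma~\ref{if a inflated then aa is rectangl ab gr} the set $T:=A\cdot A$ carries the rectangular band structure, so that $A$ is an inflation of $T$. Let $\pi\colon A\to T$ send each $x$ to the unique element of $T$ lying in its $\alpha$-class; then $\pi$ is a retraction homomorphism satisfying $xy=\pi(x)\pi(y)$ for all $x,y\in A$. By Remark~\ref{pryam svyazk} I may identify $T$ with a direct product $H\times I\times J$, where $H$ is a periodic Abelian group, $I$ is a left zero semigroup and $J$ is a right zero semigroup.

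Next I would analyse an arbitrary subsemigroup $B$ of $\langle A,\cdot\rangle$ and show that its trace on $T$ is a ``box''. Since every product lands in $T$, the set $\pi(B)$ is a subsemigroup of $T$, and $B\cap T$ is a subsemigroup containing $\{b\cdot b\mid b\in B\}$, hence nonempty. Writing $C:=\pi(B)$, one has $C\cdot C\subseteq B\cap T\subseteq C$; because $H$ is periodic, a subsemigroup of $H$ is already a subgroup, and from this one deduces that $C=K\times I_0\times J_0$ with $K\le H$ a subgroup, $I_0\subseteq I$ and $J_0\subseteq J$, so that $C\cdot C=C$ and therefore $B\cap T=\pi(B)=K\times I_0\times J_0$. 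The crucial consequence is $\pi(B)=B\cap T\subseteq B$, i.e. every $b\in B$ satisfies $\pi(b)\in B$; this is exactly the point where periodicity is indispensable.

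Then I would build the congruence in two layers. On $T\cong H\times I\times J$ take the product congruence $\bar\Theta$ given by the coset congruence of $K$ on $H$, the equivalence on $I$ whose only nonsingleton class is $I_0$, and the analogous equivalence on $J$; each factor is a congruence on its factor (cosets for the Abelian group, arbitrary equivalences for the zero semigroups), so $\bar\Theta$ is a congruence whose class of any box element is precisely $K\times I_0\times J_0=B\cap T$. I would then pull $\bar\Theta$ back along $\pi$ and split the single fibre-union $\pi^{-1}(B\cap T)$ into the two pieces $B$ and $\pi^{-1}(B\cap T)\setminus B$, leaving the remaining fibre-unions $\pi^{-1}(\bar C)$ intact; this defines the candidate relation $\Theta$, whose classes are $B$, the leftover, and the other fibre-unions.

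Finally I would verify that $\Theta$ is a congruence with $B$ as a block. Compatibility reduces, via $xz=\pi(x)\pi(z)$ and $zx=\pi(z)\pi(x)$, to a short case check: two $\Theta$-related elements have $\bar\Theta$-related $\pi$-images, so their products have $\pi$-images in a common $\bar\Theta$-class; if that class is not the box, the products lie in a common leftover fibre-union, while if it is the box then both products already lie in $B\cap T\subseteq B$. The main obstacle is precisely this fibre bookkeeping --- ensuring the distinguished class is neither larger than $B$ (which would fail for the naive pullback congruence, whose box class is the whole fibre-union) nor smaller --- and it is resolved by the identity $\pi(B)=B\cap T$ established above, which guarantees $B\subseteq\pi^{-1}(B\cap T)$, so that the split is well defined and recovers $B$ exactly.
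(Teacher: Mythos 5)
Your proof is correct and follows the same overall strategy as the paper's: produce a congruence on the base $T=A\cdot A$ of the inflation whose class through $B\cap T$ is exactly $B\cap T$, use periodicity to show that the projection $\pi(b)$ of every $b\in B$ lies in $B$, and then lift along the inflation. Two differences are worth recording. First, where you build the band congruence by hand from the decomposition $T\cong H\times I\times J$ of Remark \ref{pryam svyazk} (cosets of $K$ on $H$, the equivalence on $I$ whose only nontrivial class is $I_0$, and likewise on $J$), the paper invokes that remark together with Proposition 4.6 of \cite{war2} to conclude that $T$ is Hamiltonian and simply takes $\Theta_1$ to be a congruence having $B\cap T$ as a class; your version is longer but self-contained, and your observation that $\pi(B)=B\cap T$ is a ``box'' is not needed if one is willing to cite Warne. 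Second --- and this is a genuine improvement --- the paper defines $\Theta$ as the least equivalence containing $\Theta_1$ and $\alpha$ and asserts that $B$ is a $\Theta$-class; but the $\Theta$-class containing $B$ is the whole fibre union $\pi^{-1}(B\cap T)$, which can properly contain $B$ (already for a proper inflation of a one-element group with $B$ the trivial subgroup). Your final step of splitting $\pi^{-1}(B\cap T)$ into $B$ and its complement and rechecking compatibility --- which works precisely because all products land in $T$ and $B\cap T\subseteq B$ --- is exactly what is needed to close that gap, while the periodicity argument giving $\pi(B)\subseteq B$ plays the same role in both write-ups.
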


\begin{proof} Let the semigroup
$\langle A,\cdot \rangle$ is an inflation of $\langle C,\cdot
\rangle$ where $\langle C,\cdot \rangle$ is a rectangular band of
the periodic Abelian groups. Suppose that $\langle B,\cdot
\rangle$ is a subsemigroup of $\langle A,\cdot \rangle$. By Remark
\ref{pryam svyazk} and Proposition 4.6 from \cite{war2} $\langle
C,\cdot \rangle$ is a Hamilton semigroup. Then there is the
congruence $\Theta_1$ on $\langle C,\cdot \rangle$ such that
$B\cap C$ is block of the congruence. Denote the least equivalence
relation on $\langle A,\cdot \rangle$ which contains $\Theta_1$
and $\{(a,b)\in A\times A \mid a\alpha b\}$ by $\Theta$. Note that
if $b'\in B$, $b'\alpha b$ and $b\in C$ than $b\in B$. Really
since all elements of semigroup $\langle C,\cdot \rangle$ have
finite order then $(b')^{k}=b^{k}=e$ for some $k\geq 1$, where $e$
is an unit of a group containing $b$. Then
$b=eb=(b')^{k}b=(b')^{k+1}\in B$. So $\Theta$ is a congruence on
$\langle A,\cdot \rangle$ such that $B$ is block of the
congruence.
\end{proof}

\begin{theorem}An Abelian semigroup $\langle A,\cdot \rangle$ is a Hamiltonian algebra iff
$\langle A,\cdot \rangle$ is an inflation of a rectangular band of
the periodic Abelian groups and the product of idempotents of $A $
is idempotent of $A $.
\end{theorem}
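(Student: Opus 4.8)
The plan is to prove the two implications separately, leaning on the structural characterisation of Abelian semigroups satisfying $(*)$ (Theorem \ref{a - ab iff a - inflated of rec band ab gr}) together with the periodicity that the Hamiltonian hypothesis forces via Lemma \ref{gam polugr period}.

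For sufficiency there is essentially nothing new to do. If $\langle A,\cdot\rangle$ is an inflation of a rectangular band of periodic Abelian groups and the product of idempotents of $A$ is again an idempotent, then Proposition \ref{then gam semigr} applies verbatim and yields that $\langle A,\cdot\rangle$ is Hamiltonian. (Note that the Abelian hypothesis is not even needed for this direction, since the Proposition already delivers the conclusion from the structural condition alone.)

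For necessity, assume $\langle A,\cdot\rangle$ is Abelian and Hamiltonian. First I would invoke Lemma \ref{gam polugr period}: for every $a\in A$ there are $1\le i<j$ with $a^i=a^j$, so the cyclic subsemigroup generated by any element is finite and hence contains an idempotent power; in particular $\langle A,\cdot\rangle$ is periodic. The central step — and the one I expect to be the main obstacle — is to show that periodicity together with the Abelian law forces condition $(*)$, namely $bcA=bA$ and $Abc=Ac$ for all $b,c\in A$. The inclusions $bcA\subseteq bA$ and $Abc\subseteq Ac$ are immediate. For the reverse inclusions I would fix an idempotent power $c^n$ of $c$, so that $c^n=c^{2n}$, and insert it using Lemma \ref{the Insert of an idempotent}: for every $x\in A$,
$$bx=bc^{n}x=bc^{2n}x=(bc)(c^{2n-1}x)\in bcA,$$
which gives $bA\subseteq bcA$; a symmetric computation with an idempotent power $b^{m}$ of $b$ gives
$$ac=ab^{m}c=ab^{2m}c=(ab^{2m-1})(bc)\in Abc,$$
hence $Ac\subseteq Abc$. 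Thus $(*)$ holds (the exponent $2n-1\ge 1$ is used only to avoid writing $c^{0}$, so no case distinction is needed).

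With $(*)$ in hand, Theorem \ref{a - ab iff a - inflated of rec band ab gr} immediately tells us that $\langle A,\cdot\rangle$ is an inflation of a rectangular band of Abelian groups $\{\langle Z_{ef};\cdot\rangle\}$ and that the product of any two idempotents of $A$ is idempotent. It only remains to upgrade "Abelian groups'' to "periodic Abelian groups'': each $\langle Z_{ef};\cdot\rangle$ is a subsemigroup of the periodic semigroup $A$, so every one of its elements $g$ satisfies $g^{i}=g^{j}$ for some $i<j$, which in a group forces $g$ to have finite order. Hence the bands consist of periodic Abelian groups, and the necessity direction — and with it the theorem — is complete.
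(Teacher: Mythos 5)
Your proposal is correct and follows the same skeleton as the paper's proof: sufficiency via Proposition \ref{then gam semigr}, and necessity by first deriving condition $(*)$ and then invoking Theorem \ref{a - ab iff a - inflated of rec band ab gr} together with periodicity from Lemma \ref{gam polugr period}. The only divergence is the derivation of $(*)$: the paper applies the Abelian implication directly to $a(b^{i}c)=a(b^{j}c)=ab^{j-i}(b^{i}c)$ to conclude $ac=ab^{j-i}c\in abA$, whereas you route through an idempotent power of $c$ and Lemma \ref{the Insert of an idempotent}; both work, yours merely using in addition the standard (and easily verified) fact that an element with $c^{i}=c^{j}$ has an idempotent power.
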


\begin{proof} The sufficiency is follows by Proposition \ref{then gam
semigr}. Let us prove necessity. Let $\langle A,\cdot \rangle$ be
Hamiltonian and Abelian semigroup and $a,b\in A$. We claim that
$aA\subseteq abA$. By Lemma \ref{gam polugr period} there exist
$i,j$, $1\leq i<j$, such that $b^i=b^j$ exists. Then
$a(b^{i}c)=a(b^{j}c)$. Hence $a(b^{i}c)=ab^{j-i}(b^{i}c)$. Since
an algebra $\langle A,\cdot \rangle$ is Abelian then
$ac=ab^{j-i}c$, that is $ac \in abA$ and the inclusion
$aA\subseteq abA$ is proved. Therefore the semigroup $\langle
A,\cdot \rangle$ satisfies condition $(*)$. By Theorem \ref{a - ab
iff a - inflated of rec band ab gr}the semigroup $\langle A,\cdot
\rangle$ is an inflation of a rectangular band of Abelian groups,
which are periodic by Lemma \ref{gam polugr period}, and the
product of idempotents of $A $ is idempotent of $A$.
\end{proof}

\end{document}